\documentclass{article}

\usepackage[utf8]{inputenc}
\usepackage{bbold}

\newcommand\numeq[1]%
  {\stackrel{\scriptscriptstyle(\mkern-1.5mu#1\mkern-1.5mu)}{=}}

\usepackage{url}
\usepackage{enumitem}
\usepackage{xcolor}
\usepackage{chngpage}
\usepackage{amsmath}
\usepackage{amsfonts}
\usepackage{amsthm}
\usepackage{esvect}
\usepackage{todonotes}
\usepackage{ amssymb }
\usepackage{mathtools}
\usepackage{subfiles}
\usepackage{comment}
\usepackage{diagbox}
\usepackage[maxnames=10]{biblatex}
\usepackage{subfig}
\usepackage{graphicx}
\usepackage{tikz}

\definecolor{darkcerulean}{rgb}{0.03, 0.27, 0.49}
\usepackage[colorlinks=true,linkcolor=darkcerulean,citecolor=brown]{hyperref}
\addbibresource{main_extended.bib}

\DeclarePairedDelimiter\floor{\lfloor}{\rfloor}

\numberwithin{equation}{section}
\newcolumntype{L}{>{$}l<{$}}
\newcolumntype{C}{>{$}c<{$}}
\newcommand\n{n}
\newcommand{\suchthat}{\;\ifnum\currentgrouptype=16 \middle\fi|\;}
\newtheorem{theorem}{Theorem}[section]
\newtheorem{conjecture}[theorem]{Conjecture}
\newtheorem{corollary}[theorem]{Corollary}
\newtheorem{claim}[theorem]{Claim}
\newtheorem{note}[theorem]{Note}
\newtheorem{definition}[theorem]{Definition}
\newtheorem{question}[theorem]{Question}
\newtheorem{lemma}[theorem]{Lemma}
\newtheorem{proposition}[theorem]{Proposition}

\title{Hyperpaths}
\author{{Amir Dahari}\thanks{School of Computer Science and Engineering, Hebrew University, Jerusalem 91904, Israel. e-mail: amir.dahari@mail.huji.ac.il.} \and {Nathan Linial}\thanks{School of Computer Science and Engineering, Hebrew University, Jerusalem 91904, Israel. e-mail: nati@cs.huji.ac.il. Supported by BSF US-Israel grant 2018313 "Between topology and combinatorics"}}
\date{}

\begin{document}

\maketitle
\begin{abstract}
Hypertrees are high-dimensional counterparts of graph theoretic trees. They have attracted a great deal of attention by various investigators. Here
we introduce and study {\em Hyperpaths} - a particular class of hypertrees which are high dimensional analogs of paths in graph theory. A $d$-dimensional hyperpath is a $d$-dimensional hypertree in which every $(d-1)$-dimensional face is contained in at most $(d+1)$ faces of dimension $d$. We introduce a possibly infinite family of hyperpaths for every dimension, and investigate its properties in greater depth for dimension $d=2$.
\end{abstract}

\section{Introduction}

{\em Hypertrees} were defined in 1983 by Kalai \cite{kalai1983enumeration}. An $n$-vertex $d$-dimensional hypertree $X=(V,E)$ is a $\mathbb{Q}$-acyclic $d$-dimensional simplicial complex with a full $(d-1)$-dimensional skeleton and $\binom{n-1}{d}$ faces of dimension $d$. Note that when $d=1$ this coincides with the usual notion of a tree in graph theory. Also note that a hypertree is completely specified by its list of $d$-dimensional faces. There is already a sizable literature, e.g., \cite{linial2019enumeration, linial2016phase} dealing with hypertrees, but many basic questions in this area are still open. Also, in order to develop an intuition for these complexes it is desirable to have a large supply of different constructions. Many investigations in this area are done with an eye to the one-dimensional situation. Arguably the simplest-to-describe ($1$-dimensional) trees are stars and paths. These two families of trees are also the two extreme examples with respect to various natural graph properties and parameters such as the tree's diameter. Hyperstars are very easy to describe in any dimension $d$. Namely, we pick a vertex $v\in V$ and put a $d$-face $\sigma$ in $E$ iff $v\in\sigma$. On the other hand, it is much less obvious how to define $d$-dimensional paths. A one-dimensional path is a tree in which every vertex has degree at most $2$. Working by analogy we can define a $d$-dimensional hyperpath as a $d$-dimensional hypertree in which every $(d-1)$-dimensional face is contained in no more than $(d+1)$ faces of dimension $d$. We include a summary of the main results presented in this paper:
\begin{enumerate}
    \item We introduce an infinite family of $d$-dimensional algebraically-defined simplicial complexes. In dimension $d=2$ we analyzed fairly large (up to $n\sim1400$) such complexes most of which turned out to be $2$-dimensional hyperpaths. To this end we devised a new fast algorithm that determines whether a matrix with circulant blocks is invertible. \label{computational_results}
    \item Negative results: We showed that infinitely many of the $2$-dimensional complexes discussed in Item \ref{computational_results} are {\em not} $\mathbb{Q}$-acyclic.
    \item We develop several approaches for proving positive results and finding an infinite family of $2$-dimensional hyperpaths.
\end{enumerate}

\begin{note}
The necessary background in simplicial combinatorics and in number theory are introduced in section \ref{section:preliminaries}.
\end{note}

\begin{definition}\label{n_c_definition}
Let $\mathbb{F}_{\n}$ be the field of prime order $n$. For $c\in {\mathbb{F}}_{\n}$ and $d\ge 1$ an integer, we define the complex $X=X_{d,n,c}$ on vertex set $\mathbb{F}_{\n}$. It has a full $(d-1)$-dimensional skeleton, and $\{x_0, x_1,\ldots, x_d\}$ is a $d$-face in $X$ iff $cx_d+\sum_{j=0}^{d-1} x_j\equiv0 \mod{n}$. \footnote{Throughout this paper, unless stated otherwise, given a prime $n$, all arithmetic equations are $\bmod n$, and we often replace the congruence relation $\equiv$ by an equality sign when no confusion is possible.}
\end{definition}

Ours is by no means the only sensible definition of a hyperpath. An alternative approach is described in \cite{mathew2015boundaries}. That paper starts from the observation that a ($1$-dimensional) path is characterised as a tree that can be made a spanning cycle by adding a single edge. In this view they define a Hamiltonian $d$-cycle as a simple $d$-cycle of size $\binom{n-1}{d}+1$. A Hamiltonian $d$-dimensional hyperpath is defined as a $d$-dimensional hypertree which can be made a Hamiltonian $d$-cycle by adding a single $d$-face. Other possibilities suggest themselves. For example, when an edge is added to a tree a single cycle is created. One may wonder how the length of this cycle is distributed when the added edge is chosen randomly. A path is characterized as the tree for which the average of this length is maximized. Similar notions clearly make sense also for $d>1$. These various definitions coincide when $d=1$ but disagree for $d>1$. It would be interesting to understand the relations between these different definitions. Note that sum complexes \cite{linial2010sum} as well as certain hypertrees from \cite{linial2019enumeration} are hyperpaths according to our definition.

\subsection*{Running Example}
We repeatedly return throughout the paper to the example corresponding to $d=2,n=13,c=5$. Some of the $2$-faces in $X_{2,13,5}$ are $\{0,1,5\},\{2,3,12\},\{2,9,3\}$ since $0+1+5\cdot5= 2+3+5\cdot12=2+9+5\cdot3=0$. As the next claim shows the number of $2$-faces in $X_{2,13,5}$ is $\binom{12}{2}=66$ (out of the total of $\binom{13}{3}=286\ 2$-faces).
\vspace{2mm}
\hrule
\vspace{2mm}
Given the vertex set $\{x_0, x_1,\ldots, x_d\}$ of a $d$-face as in definition \ref{n_c_definition}, and if $c\neq1$, the coordinate that is multiplied by $c$ is uniquely defined. For if $cx_d+\sum_{j=0}^{d-1} x_j=cx_0+\sum_{j=1}^{d} x_j=0$, then $(c-1)\cdot x_d=(c-1)\cdot x_0$. This is impossible, since we are assuming that all $x_i$ are distinct, and $c\neq 1$.

\begin{claim}\label{right_number}
For an integer $d\ge 1$, a prime $\n$ and $c\in\mathbb{F}_n$, if $c\neq{-d,1}$ then $X=X_{d,n,c}$ has exactly $\binom{n-1}{d}$ $d$-faces. If $c=1$ then $X=X_{d,n,1}$ has exactly $\frac{1}{d+1}\binom{n-1}{d}$ $d$-faces. 
\end{claim}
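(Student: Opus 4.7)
The plan is to count ordered tuples rather than unordered faces, and then to evaluate the ordered count by a linear change of variables that kills the special role of $x_d$.

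First, let $T_c$ denote the number of ordered tuples $(x_0,x_1,\dots,x_d)\in\mathbb{F}_n^{d+1}$ with pairwise distinct entries satisfying $cx_d+\sum_{j=0}^{d-1}x_j=0$. The paragraph preceding the claim shows that when $c\ne 1$ the ``special'' coordinate (the one multiplied by $c$) is determined by the underlying unordered set, so each $d$-face contributes exactly $d!$ ordered tuples, one per ordering of the $d$ non-special vertices. When $c=1$ the relation $\sum_j x_j=0$ is symmetric in all $d+1$ coordinates, so each $d$-face contributes $(d+1)!$ ordered tuples. Hence the number of $d$-faces equals $T_c/d!$ in the first case and $T_1/(d+1)!$ in the second, so it suffices to prove $T_c=d!\binom{n-1}{d}$ whenever $c\ne -d$.

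Next, I introduce the substitution $y_i:=x_i-x_0$ for $i=1,\dots,d$. This is a bijection between ordered tuples $(x_0,x_1,\dots,x_d)$ of distinct elements of $\mathbb{F}_n$ and pairs $\bigl(x_0,(y_1,\dots,y_d)\bigr)$ consisting of an element $x_0\in\mathbb{F}_n$ together with an ordered tuple $(y_1,\dots,y_d)$ of pairwise distinct nonzero elements of $\mathbb{F}_n$ (the nonzero condition encodes $x_i\ne x_0$ and the distinctness of the $y_i$'s encodes $x_i\ne x_j$ for $i,j\ge 1$). A one-line computation shows the defining relation becomes
$$(c+d)\,x_0 + c\,y_d + \sum_{j=1}^{d-1}y_j = 0.$$

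Finally, provided $c+d\not\equiv 0\pmod n$, i.e.\ $c\ne -d$, this equation determines $x_0$ uniquely from any choice of $(y_1,\dots,y_d)$. The number of ordered $d$-tuples of pairwise distinct nonzero elements of $\mathbb{F}_n$ is $(n-1)(n-2)\cdots(n-d)=d!\binom{n-1}{d}$, so $T_c=d!\binom{n-1}{d}$. Dividing by $d!$ (respectively $(d+1)!$) yields the two claimed face counts in the cases $c\ne 1,-d$ (respectively $c=1$). The heart of the argument is the linearizing substitution $y_i=x_i-x_0$, which reduces everything to counting ordered tuples of pairwise distinct nonzero elements; I don't anticipate any real obstacle beyond checking the arithmetic and the bijection.
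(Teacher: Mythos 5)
Your proof is correct, but it takes a genuinely different route from the paper's. The paper argues by induction on $d$: the base case $d=1$ is done by hand, and the inductive step splits into $c=0$ (where the count reduces to the number of $(d-1)$-faces of $X_{d-1,n,1}$ times the $n-d$ choices of the unconstrained vertex) and $c\neq0$ (where one starts from the $\binom{n}{d}$ candidate $(d-1)$-faces, each determining a unique completion $y$, and subtracts the degenerate completions, counted by the induction hypothesis applied in dimension $d-1$ with parameter $c+1$, using Pascal's rule $\binom{n}{d}-\binom{n-1}{d-1}=\binom{n-1}{d}$). You instead count ordered solutions directly: the substitution $y_i=x_i-x_0$ turns the defining relation into $(c+d)x_0+c\,y_d+\sum_{j=1}^{d-1}y_j=0$, so for $c\neq-d$ the ordered solutions with distinct entries are in bijection with ordered $d$-tuples of pairwise distinct nonzero residues, of which there are $(n-1)\cdots(n-d)=d!\binom{n-1}{d}$; the passage from ordered tuples to faces divides by $d!$ (or by $(d+1)!$ when $c=1$), which is justified by the uniqueness of the ``special'' coordinate established in the paragraph preceding the claim. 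Your approach is shorter, dispenses with both the induction and the separate treatment of $c=0$, and makes transparent exactly where the two excluded values enter: $c=-d$ is needed so that the linear equation determines $x_0$, and $c=1$ so that the correspondence is $d!$-to-one rather than $(d+1)!$-to-one. The one caveat, shared with the paper, is that the $c=1$ case tacitly assumes $1\not\equiv-d\pmod n$ (equivalently $d<n-1$), without which $\frac{1}{d+1}\binom{n-1}{d}$ is not even an integer; since the claim's statement carries the same implicit assumption, this is not a gap relative to the paper.
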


\begin{proof}

By induction on $d$. Let us start with $d=1$. If $c\neq0$, then for every $x\neq0$, there is a unique $y\neq0$ s.t.\ $x+c\cdot y=0$. Also, $x\neq y$, since by assumption $c\neq-1(=-d)$. This yields $\binom{n-1}{1}=n-1$ edges, unless $c=1$ in which case every edge is counted twice with a total of $\frac{1}{2}\cdot\binom{n-1}{1}$ different edges. When $c=0$, the complex has $n-1$ edges, namely, $\{0,y\}$ for all $y\neq0$. 

We proceed to deal with $d>1$, 
\begin{itemize}
    \item If $c=0$, then $\{x_0,\dots,x_{d-1},y\}$ is a $d$-face iff $\sum_{i=0}^{i=d-2}x_i+1\cdot x_{d-1}=0$. By the induction hypothesis with $c=1$ and dimension $d-1$ there are exactly $\frac{1}{d}\binom{n-1}{d-1}$ such different choices of $\{x_0,\dots,x_{d-1}\}$. For every such choice of $\{x_0,\dots,x_{d-1}\}$ there are $n-d$ choices for $y$, namely, any value not in $\{x_0,\dots,x_{d-1}\}$, yielding a total of
    $$\frac{1}{d}\binom{n-1}{d-1}\cdot(n-d)=\binom{n-1}{d}$$ distinct $d$-faces.
    \item If $c\neq0$, for each of the $\binom{n}{d}$ $(d-1)$-faces $\{x_0,\dots,x_{d-1}\}$ there is a unique $y$ satisfying $\sum_{i=0}^{d-1}x_i+c\cdot y=0$. This gives a $d-$face, unless $y\in \{x_0,\dots,x_{d-1}\}$. By reordering the $x_i$ if necessary $y=x_{d-1}$, which yields $$\sum_{i=0}^{d-2}x_i+(c+1)\cdot x_{d-1}=0$$ 
Since $c\neq0,-d$, we know that $c+1\neq1,-(d-1)$ and we can apply the induction hypothesis for dimension $d-1$ to obtain $\binom{n-1}{d-1}$ such different choices of $\{x_0,\dots,x_{d-1}\}$. All told there are $$\binom{n}{d}-\binom{n-1}{d-1}=\binom{n-1}{d}$$ distinct $d$-faces in $X_{d,n,c}$. If $c=1$ $y$ has no special role and we over-count by a multiple of $d+1$, hence we get only $\frac{1}{d+1}\binom{n-1}{d}$ distinct $d$-faces. 
\end{itemize}
\end{proof}

Henceforth to simplify matters we assume $c\not\in\{0,1,-d,-1\}$.

In the $1$-dimensional case the resulting graph $G=(V,E)$ has $V=\mathbb{F}_n$ and $E=\{\{x, -\frac{x}{c}\}\suchthat x\neq 0\}$. Consequently $G$ is the union of $\frac{n-1}{o(-c^{-1})}$ circles of length $o(-c^{-1})$. We will later see that $o(c)$ plays a crucial rule in determining whether $X_{2,n,c}$ is a hypertree.

We note that if $X=X_{d,n,c}$ is a hypertree, then it is a hyperpath, since every $(d-1)$-face $\sigma$ in $X$ is contained in at most $d+1$ of its $d$-faces. Indeed, let $y\not\in\sigma$ be the vertex that is added to $\sigma$ to form a $d$-face. Then either $\sum_{i=0}^{d-1}x_i+c\cdot y=0$ or there is an index $d-1\ge j\ge 0$ such that $cx_j+\sum_{i\neq j}x_i+ y=0$.

\subsection*{Running Example}
The edge $\{1,5\}$ in the complex $X_{2,13,5}$ is included in the faces $\{5,1,4\},\{0,1,5\}$ and $\{5,3,1\}$ with the convention that the last vertex is multiplied by $c=5$. In contrast $\{1,4\}$ is included in only two faces, namely $\{1,4,12\}$ and $\{5,1,4\}$. The equality $4+4+5\cdot1=0$ yields the non-face $\{4,4,1\}$.
\vspace{2mm}
\hrule
\vspace{2mm}

Since we focus mostly on the $2$-dimensional case $d=2$, we use the shorthand $X=X_{2,\n,c}$. The boundary operator of $X$ is given by an $\binom{n}{2}\times \binom{n-1}{2}$ matrix which we denote by $A=A_{\n,c}$. Clearly $X$ is a hyperpath iff $A$ has a full column rank, and indeed our main technical question is:

\begin{figure}[htb!]
\centering
\begin{tikzpicture}
    \node (img1)  {\includegraphics[width=0.9\textwidth,height=\textheight,keepaspectratio]{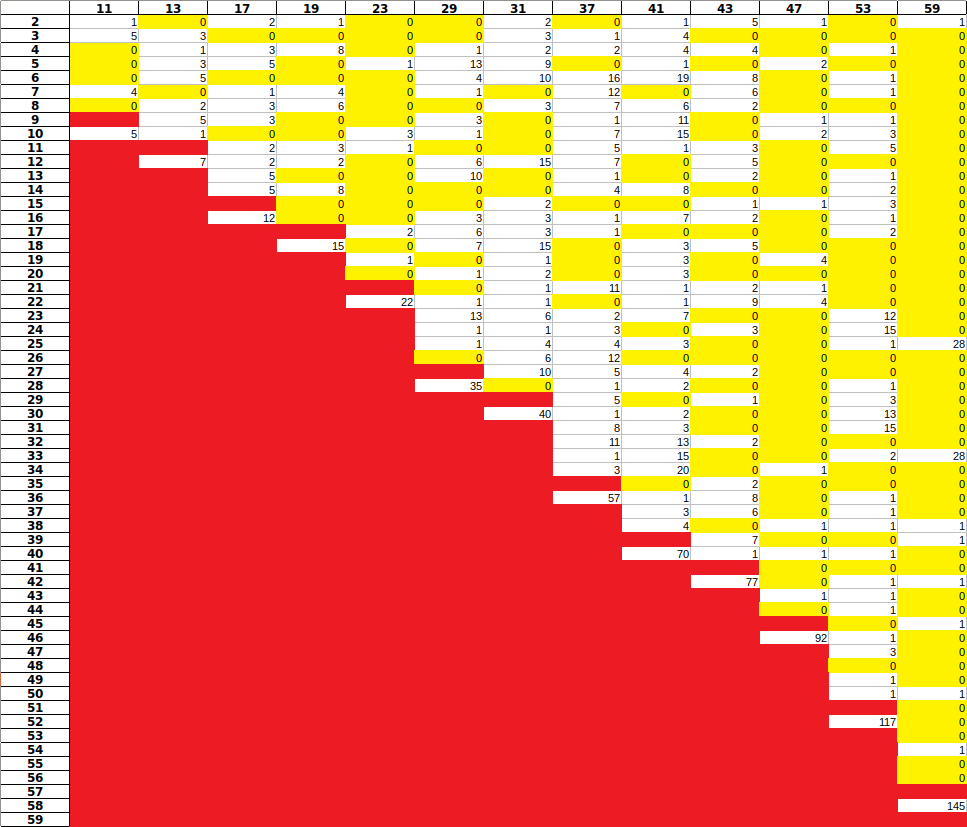}};
    \node[above=3.8, node distance=0cm, yshift=1cm,font=\color{black}] {n (Primes)};
    \node[left=of img1, node distance=0cm, rotate=90, anchor=center,yshift=-0.7cm,font=\color{black}] {c};
    \end{tikzpicture}
    \caption{Data on $X=X_{2,n,c}$ for all primes $11\leq n\leq 59$. A yellow entry means that $X$ is a hypertree. White entries show the (positive) co-dimension of the column space of $A=A_{n,c}$. Red indicates an illegal $c$, i.e., $c\equiv-2$ or $c\geq n$.}
    \label{fig:chunk_of_the_table}
    \centering
\end{figure}

\begin{question}\label{q:main}
For which primes $n$ and $c\in\mathbb{F}_n$ is $X=X_{2,\n,c}$ a hypertree?
\end{question}

Figure \ref{fig:chunk_of_the_table} shows the answer for Question \ref{q:main} for all primes $11\leq n\leq 59$ and each appropriate $c$. Figure \ref{fig:acyclic_percentage} shows the fraction of eligible $c$ for which $X=X_{2,\n,c}$ is a hypertree for all primes $11\leq n\leq1373$.  

\vspace{2mm}
The paper is structured as follows: in Section \ref{section:preliminaries} we discuss some preliminary facts and outline the necessary background in number theory and simplicial combinatorics. It turns out that the problem whether the complex $X$ is acyclic reduces to the question whether a certain matrix with a special structure is invertible. We study this special structure in Section \ref{section:MCB}, explain the reduction and give a new fast algorithm that determines if a matrix of this kind is invertible. In Section \ref{polynomials in the blocks} we further investigate this reduction. This allows us to exhibit in Section \ref{section:Non-acyclic Complexes} an infinite family of non-acyclic $2$-dimensional complexes. We conjecture that a certain simple criterion asymptotically determines if a complex is acyclic or not. Section \ref{section:full_matrices} is devoted to another approach in search of an infinite family of $2$-dimensional hyperpaths.

\section{Preliminaries}\label{section:preliminaries}
Many matrices are defined throughout this paper. They are marked throughout by hyperlinks that can return the reader to their definitions. In Appendix \ref{appendix:_matrix_map} we collect the basic properties of these matrices and their mutual relations. 

\subsection{Some relevant number theory}
For a prime $n$, we denote by $\mathbb{F}_n=\{0,1,\dots,n-1\}$ the field with $n$ elements. Addition and multiplication are done mod $n$. The multiplicative group of $\mathbb{F}_n$ is comprised of the set $\mathbb{F}_n^*=\{1,\dots,n-1\}$. It is a cyclic group isomorphic to $\mathbb{Z}/(n-1)\mathbb{Z}$. The order, $o(x)$ of $x\in\mathbb{F}_n^*$ is the smallest positive integer $r$ s.t.\ $x^r=1$. The following easy lemma gives the orders of $x$'s powers: 

\begin{lemma}\label{lemma_gcd_in_background}
If $n$ is prime and $x\in\mathbb{F}_n^*$, then for every integer $j$
$$o(x^j)=\frac{o(x)}{\gcd\left(j,o(x)\right)}$$
\end{lemma}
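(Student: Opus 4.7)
The plan is to reduce to the divisibility characterisation of orders in a cyclic group; the statement itself is just a statement about the cyclic group $\mathbb{F}_n^*$, so primality of $n$ is used only to guarantee that $\mathbb{F}_n^*$ is a group. Set $m:=o(x)$ and $d:=\gcd(j,m)$, and write $j=dj'$, $m=dm'$ with $\gcd(j',m')=1$. The goal is then to show $o(x^j)=m'$.

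For the upper bound on $o(x^j)$, I would simply compute
$$(x^j)^{m'} = x^{jm'} = x^{dj'm'} = x^{j'm} = (x^m)^{j'} = 1,$$
which shows $o(x^j)\mid m'$. For the matching lower bound, suppose $(x^j)^k=1$ for some positive integer $k$. Then $x^{jk}=1$, so by the defining property of $m=o(x)$ we have $m\mid jk$, i.e.\ $dm'\mid dj'k$, hence $m'\mid j'k$. Since $\gcd(j',m')=1$, this forces $m'\mid k$. Taking $k=o(x^j)$ gives $m'\mid o(x^j)$, and combining with the upper bound yields $o(x^j)=m'=m/d$ as claimed.

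The argument is entirely elementary and contains no real obstacle; if there is one delicate point it is the step $m'\mid j'k\Rightarrow m'\mid k$, which is the only place where the coprimality $\gcd(j',m')=1$ produced by factoring out $d$ is actually used. Everything else is bookkeeping, and no property of $\mathbb{F}_n$ beyond the fact that $\mathbb{F}_n^*$ is a group is required, so the same proof would work verbatim for any element of any finite group.
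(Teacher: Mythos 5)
Your proof is correct. It differs from the paper's in how the final gcd arithmetic is packaged: the paper observes that $j\cdot o(x^j)$ is the smallest positive multiple of $j$ divisible by $o(x)$, identifies it with $\mathrm{lcm}(j,o(x))$, and then invokes the identity $\mathrm{lcm}(j,o(x))=j\cdot o(x)/\gcd(j,o(x))$, whereas you factor out $d=\gcd(j,o(x))$ and verify the two divisibilities $o(x^j)\mid m'$ and $m'\mid o(x^j)$ directly, using coprimality of $j'$ and $m'$ at exactly one step. Both arguments rest on the same core fact that $x^{jk}=1$ iff $o(x)\mid jk$; your version is somewhat more self-contained (it does not take the lcm--gcd identity as given, and is effectively a proof of it), and you correctly note that nothing beyond $\mathbb{F}_n^*$ being a finite group is used --- a remark the paper does not make but which is equally true of its own argument. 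The paper's version buys brevity at the cost of leaving the minimality step ("the smallest multiple of $j$ divisible by $o(x)$ is the lcm") implicit.
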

\begin{proof}

By definition, $o(x^j)$ is the smallest positive integer $l$ s.t. $x^{j\cdot l}=1$. This exponent $j\cdot l$ must be divisible by $o(x)$, and is therefore the least common multiple of $j$ and $o(x)$. Consequently
$$j\cdot l=\text{lcm}\left(j,o(x)\right)=\frac{j\cdot o(x)}{\gcd\left(j,o(x)\right)}$$
as claimed. 
\end{proof}

Recall Euler's totient function $\varphi$. Namely, $\varphi(t)$ is the number of integers in $\{1,\dots,t-1\}$ that are co-prime with $t$.
It is also the order of the multiplicative group mod $t$.

Clearly $x$ is a generator of $\mathbb{F}_n^*$ iff $o(x)=|\mathbb{F}_n^*|=n-1$. By the above comments, $\mathbb{F}_n^*$ has exactly $\varphi\left(n-1\right)$ generators. We write logarithms w.r.t.\ some fixed generator $\lambda$ of $\mathbb{F}_n^*$. I.e., $\log_\lambda (u)=\log (u)$ is the unique $k\in\{0,\dots,\n-2\}$ for which $\lambda^k=u$.

\subsection{Background on simplicial combinatorics}\label{background_simplicial_combinatorics}

We follows the setup in Chapter 2 of \cite{linial2019extremal}. All simplicial complexes considered here have vertex set $V = \{0, . . . , n-1\}=\mathbb{F}_n$. 
A simplicial complex X is a collection of subsets of V that is closed under taking subsets. Namely, if $\sigma\in X$ and $\tau\subseteq\sigma$, then $\tau\in X$ as well.
Members of $X$ are called faces or simplices. The dimension of the simplex $\sigma\in X$ is defined as $|\sigma|-1$. A $d$-dimensional simplex is also called a $d$-simplex or a $d$-face for short. The dimension $\dim(X)$ is defined as $\max \dim(\sigma)$ over
all faces $\sigma\in X$. The size $|X|$ of a $d$-complex $X$ is the number of its $d$-faces.

The collection of the faces of dimension $\leq\ t$ of $X$, where $t<d$, is called the $t$-skeleton of $X$. We say that a $d$-complex $X$ has a full skeleton if its
$(d-1)$-skeleton contains all the faces of dimensions at most $(d-1)$ spanned by its vertex set. The permutations on the vertices of a face $\sigma$ are split in two orientations of $\sigma$, according to the permutation’s sign. The boundary operator $\partial_d$ maps an oriented $d$-simplex $\sigma=(v_0,\dots, v_d)$ to the formal sum $$\partial_d(\sigma)=\sum_{i=0}^d(-1)^i(\sigma\setminus{v_i})$$ where $\sigma\setminus{v_i}=(v_0,\dots,v_{i-1},v_{i+1},\dots,v_d)$ is an oriented $(d-1)$-simplex. 

We linearly extend the boundary operator to free $\mathbb{Q}$-sums of simplices. We consider the $\binom{n}{d}\times\binom{n}{d+1}$ matrix form of $\partial_d$ by choosing arbitrary orientations for $(d-1)$-simplices and
$d$-simplices. Note that changing the orientation of a $d$-simplex (resp. $d-1$-simplex) results in multiplying the corresponding column (resp.\ row) by $-1$. Thus the $d$-boundary of a weighted sum of $d$-simplices, viewed as a vector $z$ (of weights) of dimension $\binom{n}{d+1}$, is just the matrix-vector product $\partial_dz$.

A simple observation shows that the matrix $\partial_d$ has rank $\binom{n-1}{d}$. We denote by $A$ the submatrix of $\partial_d$ restricted to the columns associated with $d$-faces of a $d$-complex $X$. We define $\text{rank}(X)$ to be $\text{rank}(A)$. The rational $d$-homology of $X$, denoted by $H_d(X; \mathbb{Q})$, is the right kernel of the matrix $A$. Elements of $H_d (X; \mathbb{Q})$ are called $d$-cycles. A $d$-hypertree $X$ over $\mathbb{Q}$ is a $d$-complex of size $\binom{n-1}{d}$ with a trivial rational $d$-dimensional homology. This means that the columns of the matrix $A$ form a basis for the column space of $\partial_d$.

\section{Matrices with Circulant Blocks (MCB)}\label{section:MCB}
It turns out that $A_{\n,c}$ is closely related to a block matrix whose blocks are circulant matrices. So, we start our work on Question \ref{q:main} by deriving a structure theorem for such matrices. This is what we do in the present section.

Recall that a circulant matrix $C\in M_r(\mathbb{Q})$ has the following form

\begin{equation*}
C=
    \begin{pmatrix}
    c_0 & c_{r-1} &  & c_1 \\ 
c_1 & c_0 &  & c_2 \\
 & \ddots & \ddots &  \\ 
 c_{r-1} &  & c_1 & c_0
    \end{pmatrix}
\end{equation*}

Equivalently,
\begin{equation}\label{eq:circulant_as_polynomial}
    C=g(P)=c_0\cdot I+c_1\cdot P+c_2\cdot P^2+\dots+c_{r-1}\cdot P^{r-1}
\end{equation}

where $P\in M_r(\mathbb{Q})$ is the cyclic permutation matrix

\begin{equation}\label{eq:permutation_matrix}
    P=\begin{pmatrix} 0 & 0 &  & 0 & 1 \\
    1 & 0 &  &  & 0 \\
    0 & 1 &  &  &  \\
     &  & \ddots & 0 &  \\
    0 & 0 &  & 1 & 0
\end{pmatrix}  
\end{equation}

Given positive integers $r, t$, we denote by $MCB_{r,t}(\mathbb{Q)}$ (for Matrices with Circulant Blocks) the set of all matrices of the form

\begin{equation}\label{MCB_definition}
E=
    \begin{pmatrix}
    C_{0,0} & C_{0,1} &  & C_{0,t-1} \\ 
C_{1,0} & C_{1,1} &  &  \\
 &  & \vdots &  \\ 
 C_{t-1,0} &  &  & C_{t-1,t-1}.
    \end{pmatrix}
\end{equation}

where each $C_{i,j}\in M_r(\mathbb{Q})$ is a circulant matrix. When $\mathbb{Q}$ is omitted, the matrices are over $\mathbb{C}$.

This is not to be confused with the well-studied class of Circulant Block Matrices (CBM) \cite{davis2013circulant}. Such a matrix is circulant as a block matrix, but its blocks need not be circulants.

We can clearly express $E$ as follows 

\begin{equation}\label{E(P)}
E(P)=\begin{pmatrix}g_{0,0}(P) & g_{0,1}(P) &  & g_{0,t-1}(P) \\ 
g_{1,0}(P) & g_{1,1}(P) &  &  \\
 &  & \vdots &  \\ 
 g_{t-1,0}(P) &  &  & g_{t-1, t-1}(P)
\end{pmatrix} 
\end{equation}

where $g_{i,j}$ are polynomials of degree less than $r$ as in Equation (\ref{eq:circulant_as_polynomial}). Since $P^r=1$, we can view $g_{i,j}(P)$ as elements of the quotient polynomial ring 
\begin{equation}\label{eq:R_polynomial_ring}
    \mathcal{R}:=\mathbb{Q}[P]/(P^r-1)
\end{equation}

Likewise, we think of $E(P)$ as a member in the matrix ring $M_t(\mathcal{R})$. 

Associated with every $z\in\mathbb{C}$ is a {\em scalar} $t\times t$ complex matrix $\underline{E}(z)$, viz.,
\begin{equation}\label{underscore(E)(z)}
\underline{E}(z)=\begin{pmatrix}g_{0,0}(z) & g_{0,1}(z) &  & g_{0,t-1}(z) \\ 
g_{1,0}(z) & g_{1,1}(z) &  &  \\
 &  & \vdots &  \\ 
 g_{t-1,0}(z) &  &  & g_{t-1, t-1}(z),
\end{pmatrix}    
\end{equation}

\begin{theorem}\label{reduction_small_matrices}
A matrix $E\in MCB_{r,t}(\mathbb{Q})$ is singular $\iff$ $\underline{E}(\omega_k)$ is singular for some $k\vert{r}$. Here $\omega_k=exp(\frac{2\pi i}{k})$ is the primitive $k$-th root of unity.
\end{theorem}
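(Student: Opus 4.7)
The plan rests on the fact that all the blocks of $E$ are polynomials in the single matrix $P$, and $P$ is diagonalized by the $r\times r$ Fourier matrix $F$. So I would start by recalling that $F^{-1}PF=D=\mathrm{diag}(1,\omega_r,\omega_r^2,\ldots,\omega_r^{r-1})$, where $\omega_r=\exp(2\pi i/r)$. Consequently, for every polynomial $g$ of degree less than $r$, $F^{-1}g(P)F=\mathrm{diag}(g(1),g(\omega_r),\ldots,g(\omega_r^{r-1}))$. Applying the block-diagonal conjugation $I_t\otimes F$ to $E$ (where we regard $E$ as a $t\times t$ array of $r\times r$ blocks, i.e., as an element of $M_t(M_r(\mathbb{C}))$) therefore replaces each block $g_{i,j}(P)$ with the diagonal matrix $\mathrm{diag}(g_{i,j}(\omega_r^0),\ldots,g_{i,j}(\omega_r^{r-1}))$ while leaving $E$'s determinant invariant.

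Next, I would apply the "perfect shuffle" permutation that reinterprets the resulting $rt\times rt$ matrix: instead of viewing it as $t\times t$ blocks each of size $r\times r$ (now diagonal), view it as $r\times r$ blocks each of size $t\times t$, the off-diagonal outer blocks being zero. This conjugation exhibits $E$ as similar to the block-diagonal matrix $\mathrm{diag}\bigl(\underline{E}(1),\underline{E}(\omega_r),\ldots,\underline{E}(\omega_r^{r-1})\bigr)$. Hence
\begin{equation*}
\det(E)=\prod_{j=0}^{r-1}\det\bigl(\underline{E}(\omega_r^j)\bigr),
\end{equation*}
so $E$ is singular iff $\underline{E}(\omega_r^j)$ is singular for at least one $j$.

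It remains to reduce the set of test points from all $r$-th roots of unity to the representatives $\{\omega_k : k\mid r\}$. Here I would use the assumption that $E\in MCB_{r,t}(\mathbb{Q})$, which means the coefficients of every $g_{i,j}$ lie in $\mathbb{Q}$, and therefore $\det(\underline{E}(z))$ is a polynomial in $z$ with rational coefficients. Each $r$-th root of unity $\omega_r^j$ is a primitive $k$-th root of unity for exactly one $k\mid r$ (namely $k=r/\gcd(j,r)$, as in Lemma 2.1 applied to $P$), and primitive $k$-th roots of unity form a full Galois orbit over $\mathbb{Q}$. Since a polynomial with rational coefficients is either zero or nonzero uniformly along any such Galois orbit, the value $\det(\underline{E}(\omega_r^j))$ vanishes iff $\det(\underline{E}(\omega_k))$ vanishes. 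Collecting the orbits yields the claimed equivalence.

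The one step that needs the most care is the shuffle in the second paragraph: making the indexing of the permutation explicit so that the $j$-th $t\times t$ diagonal block is literally the matrix $\underline{E}(\omega_r^j)$ of the statement, rather than some reordering. Everything else is either a direct diagonalization or a standard Galois argument; the rationality hypothesis is used only in the final paragraph to collapse the $r$ eigenvalue checks into one check per divisor.
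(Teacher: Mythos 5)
Your proof is correct, and while its first half coincides with the paper's, its second half takes a genuinely different and more economical route. Like the paper, you conjugate by $I_t\otimes\mathcal{F}_r$ and then apply the perfect shuffle to exhibit $E$ as similar to $\mathrm{diag}\bigl(\underline{E}(\omega_r^0),\ldots,\underline{E}(\omega_r^{r-1})\bigr)$ (this is exactly the paper's Claim \ref{into_block_diagonal}, up to the harmless $\omega_r^{-i}$ versus $\omega_r^{j}$ indexing you flag yourself). The divergence is in how the $r$ eigenvalue checks are collapsed to one check per divisor of $r$. The paper takes a detour through the quotient ring $\mathcal{R}=\mathbb{Q}[P]/(P^r-1)$: it proves that the nonsingular matrices in $MCB_{r,t}$ form a group (Lemma \ref{MCB_group}), deduces that a singular $E$ has non-invertible determinant in $\mathcal{R}$, and then factors $P^r-1$ into cyclotomic polynomials. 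You instead extract the scalar identity $\det(E)=\prod_j\det(\underline{E}(\omega_r^j))$ directly from the block diagonalization, observe that $z\mapsto\det(\underline{E}(z))$ is a single polynomial with rational coefficients, and invoke the Galois orbit of the primitive $k$-th roots of unity (equivalently, the irreducibility of $\Psi_k$ over $\mathbb{Q}$, which is also the paper's ultimate ingredient) to transfer vanishing from an arbitrary primitive $k$-th root $\omega_r^j$ to the representative $\omega_k$. This sidesteps the ring-theoretic machinery entirely and also renders the paper's Lemma \ref{lemm:determinant_equality} unnecessary, since you never need to evaluate the quotient-ring element $\det(E(P))$; the trade-off is that you do not obtain the group structure of $MCB_{r,t}$, which the paper records as a result of independent interest. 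Both arguments use the rationality of the entries only at this final collapsing step, as you correctly note.
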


\begin{proof} 
The proof of Theorem \ref{reduction_small_matrices} uses the next claim:
\begin{claim}\label{into_block_diagonal}
Every $E\in MCB_{r,t}$ is similar to a block diagonal matrix with $t\times t$ blocks, i.e.,
\begin{equation*}
    X\cdot E\cdot X^{-1} = 
    \begin{pmatrix}
    \underline{E}(\omega_r^r) & 0 & & 0 \\
    0 & \underline{E}(\omega_r^{r-1}) & & 0 \\
    & & \ddots & \\
    0 & 0 & & \underline{E}(\omega_r^1)
    \end{pmatrix}
\end{equation*}
for some invertible matrix $X$.
\end{claim}

\begin{proof}
We recall the order-$r$ Discrete Fourier Transfrom (DFT) Matrix $\mathcal{F}_r$ whose entries are $\mathcal{F}_r[k,l] = \exp(\frac{-2\pi i k l}{r})=\omega_r^{-kl}$, where $\omega_r=\exp(\frac{2\pi i}{r})$ is a primitive $r$-th root of unity. It diagonalizes $r\times r$ circulant matrices as follows:

\begin{lemma}\label{unitary_diagonalizable}
Let $\mathcal{F}_r$ be the order-$r$ DFT matrix. If $C$ is an $r\times r$ circulant matrix, then
\begin{equation*}
    \mathcal{F}_r\cdot C\cdot \mathcal{F}_r^{-1} = \Lambda
\end{equation*}
where $\Lambda$ is the diagonal matrix whose entries are $C$'s eigenvalues, 
\begin{equation*}
    \lambda_j=c_0+c_{r-1}\omega_r^j+c_{r-2}\omega_r^{2j}+\dots+c_1\omega_r^{(r-1)j}~~~~~\text{for}~~~~~0\leq j\leq r-1.
\end{equation*}
Here $(c_0,c_1,\dots,c_{r-1})$ is $C$'s first column.
\end{lemma}

\begin{proof}
$v_j=(1,\omega_r^j,\omega_r^{2j},\dots,\omega_r^{(r-1)j})$ is an eigenvector of $C$ with corresponding eigenvalue $\lambda_j$ as
\begin{gather*}
    (C\cdot v_j)[i]= \\
    =(c_i+c_{i-1}\omega_r^j+c_{i-2}\omega_r^{2j}+\dots+c_0\omega_r^{ij}+c_{r-1}\omega_r^{(i+1)j}+\dots+c_{i+1}\omega_r^{(r-1)j})= \\
    =(c_0+c_{r-1}\omega_r^j+c_{r-2}\omega_r^{2j}+\dots+c_1\omega_r^{(r-1)j})\cdot\omega_r^{ij}=\lambda_j\omega_r^{ij}
\end{gather*}

The claim follows since $\mathcal{F}_r^{-1}=\frac 1r \mathcal{F}_r^{\ast}$
\end{proof}

Set $\mathcal{L}$ as a block diagonal matrix with $\mathcal{F}_r$ on the diagonal. Since $E\in MCB_{r,t}$, Lemma \ref{unitary_diagonalizable} yields  
\begin{equation*}
    \mathcal{L}\cdot E\cdot \mathcal{L}^{-1}=
    \begin{pmatrix}
    \Lambda_{0,0} & \Lambda_{0,1} &  & \Lambda_{0,t-1} \\ 
\Lambda_{1,0} & \Lambda_{1,1} &  &  \\
 &  & \vdots &  \\ 
 \Lambda_{t-1,0} &  &  & \Lambda_{t-1,t-1}
\end{pmatrix} 
\end{equation*}
where $\Lambda_{k,l}=diag(\mathcal{F}_r\cdot c_{k,l})$ is a diagonal $r\times r$ matrix. Here $c_{k,l}$ is the first column vector of the circulant matrix $C_{k,l}$. 

\vspace{2mm}

This matrix has $rt$ rows and columns which we enumerate from $0$ to $rt-1$. It is made up of $r\times r$ blocks, with $t$ of them at every layer. This suggests that indices in this matrix be written as $\alpha r+\beta$ for some $t>\alpha\ge 0$ and $r>\beta\ge 0$, which we interpret as index $\beta$ within block number $\alpha$. We rearrange the matrix to be made up of $t\times t$ blocks, with $r$ of them at every layer, with indices in the form $\gamma t + \delta$ where $r>\gamma\ge 0$ and $t>\delta\ge 0$. Thus the mapping
\begin{equation}\label{permutation_definition}
\varphi:\alpha r+\beta\to \beta t + \alpha
\end{equation}
is a permutation which we apply to the rows and columns of $\mathcal{L}\cdot E\cdot \mathcal{L}^{-1}$.
Since all blocks in $\mathcal{L}\cdot E\cdot \mathcal{L}^{-1}$ are diagonal, the entry in position $(\alpha r+\beta, \alpha' r+\beta')$ is nonzero only if $\beta= \beta'$. Following the application of $\varphi$, the matrix becomes an $r\times r$ diagonal matrix of $t\times t$ blocks.
\begin{equation*}
  Q\cdot\mathcal{L}\cdot E\cdot\mathcal{L}^{-1}\cdot Q^{-1}=
  \begin{pmatrix}
    \Delta_0 & 0 & & 0 \\
    0 & \Delta_1 & & 0 \\
    & & \ddots & \\
    0 & 0 & & \Delta_{r-1}
    \end{pmatrix}    
\end{equation*}
where $Q$ is the permutation matrix of $\varphi$. The matrix thus becomes an $r\times r$ block matrix, with block size $t\times t$, and $$\Delta_{i}[k,l]=\Lambda_{k,l}[i,i]$$ since the mapping (\ref{permutation_definition}) sends

\begin{gather*}
    row\ number\ k\cdot r+i \to row\ number\ i\cdot r+k \\
    column\ number\ l\cdot r+i \to column\ number\ i\cdot r+l \\
\end{gather*}

To complete the proof of Claim \ref{into_block_diagonal}, setting $X=Q\cdot\mathcal{L}$, it only remains to show that $\Delta_i=\underline{E}(\omega_r^{-i})$:

\begin{gather*}
    \Delta_i[k,l]=\Lambda_{k,l}[i,i]=(\mathcal{F}_r\cdot c_{k,l})[i]= \\
    =\sum_{j=0}^{r-1}\mathcal{F}_r[i,j]c_{k,l}[j]=\sum_{j=0}^{r-1}c_{k,l}[j]\cdot\omega_r^{-ij}=\sum_{j=0}^{r-1}c_{k,l}[j]\cdot(\omega_r^{-i})^j= \\ 
    =g_{k,l}(\omega_r^{-i})
\end{gather*}

\end{proof}

For $k\vert{r}$, $\omega_k=\omega_r^{\frac{r}{k}}$. Claim \ref{into_block_diagonal} yields one part of Theorem \ref{reduction_small_matrices}. Namely, that if $\underline{E}(\omega_k)$ is singular for some $E\in MCB_{r,t}$ and some $k\vert{r}$, then $E$ is singular. \\
In order to prove the other direction of Theorem \ref{reduction_small_matrices}, we need the following two Lemmas. Recall from Equation (\ref{eq:R_polynomial_ring}) that for a matrix $E$ in $MCB_{r,t}$, $E(P)$ is a polynomial matrix in $M_t(\mathcal{R})$ over the quotient polynomial ring $\mathcal{R}$. Its determinant $\det(E(P))$ is a polynomial in $\mathcal{R}$, and we denote by $\det(E(P))(z)$ the evaluation of this polynomial at the complex number $z\in\mathbb{C}$. 

\begin{lemma}\label{lemm:determinant_equality}
$\det(E(P))(\omega_r^j)=\det(\underline{E}(\omega_r^j))$ for every $E\in MCB_{r,t}$. 
\end{lemma}

\begin{proof}
\begin{gather*}
    \det(E(P))(\omega_r^j)=\left(\sum_{\sigma\in S_t}\left(\prod_{i=0}^t g_{i,\sigma(i)}(P)\right)\right)(\omega_r^j)= \\
    = \sum_{\sigma\in S_t}\left(\prod_{i=0}^t g_{i,\sigma(i)}(P)\right)(\omega_r^j)
    \numeq{a}
    \sum_{\sigma\in S_t}\prod_{i=0}^t g_{i,\sigma(i)}(P)(\omega_r^j)=\\
    = \sum_{\sigma\in S_t}\prod_{i=0}^t g_{i,\sigma(i)}(\omega_r^j)=
    \det(\underline{E}(\omega_r^j))
\end{gather*}
Equality ($a$) holds, because
$$P^r=(\omega_r^j)^r=1$$
\end{proof}

The next lemma appears without proof in \cite{rjasanow1994effective}. We provide a proof, since we could not find it in the literature: 

\begin{lemma}\label{MCB_group}
The non-singular matrices in $MCB_{r,t}$ form a group w.r.t.\ matrix multiplication.
\end{lemma}

\begin{proof}
Clearly $MCB_{r,t}$ is closed under product, since the product of two circulant matrices is circulant, and matrix multiplication respects block product. We only need to show closure under inverse for invertible matrices in $MCB_{r,t}$.

As mentioned above, $M_t(\mathcal{R})$ and $MCB_{r,t}$ are in one-to-one correspondence. An inverse of $E(P)$ as in Equation (\ref{E(P)}) in $M_t(\mathcal{R})$ is an inverse under this bijection of $E$ in $MCB_{r,t}$, so it remains to prove that if $E$ is invertible, then $E(P)$ has an inverse in $M_t(\mathcal{R})$. 

The determinant of a matrix over a commutative ring is defined as usual as the alternating sum of products over permutations. Such a matrix has an inverse iff its determinant is invertible, as an element of the underlying ring. The proof of this fact (see e.g., \cite{mcdonald1984linear}) goes by establishing the Cauchy–Binet formula for matrices over commutative rings.

We apply this to the commutative polynomial ring $\mathcal{R}$, and conclude that $E(P)$ has an inverse in $M_t(\mathcal{R})$ iff its determinant (which is also a polynomial in $\mathcal{R}$) is invertible. 

To prove the Lemma, let $E\in MCB_{r,t}$ be invertible. Recall the definitions of $E(P)$ in (\ref{E(P)}) and $\underline{E}(\omega_r^j)$ in (\ref{underscore(E)(z)}). By Claim \ref{into_block_diagonal} and since $E$ is invertible, we obtain:

$$\forall j\det(\underline{E}(\omega_r^j))\neq0$$
using Lemma \ref{lemm:determinant_equality} this translates into
$$\forall j\det(E(P))(\omega_r^j)\neq0$$ 
thus $\det(E(P))$ and $P^r-1$ do not share any root and
\begin{equation*}
    \text{gcd}(\det(E(P)),P^r-1)=1
\end{equation*} 
so the determinant is invertible, finishing the proof of the lemma.
\end{proof}
With Lemmas \ref{lemm:determinant_equality} and \ref{MCB_group} we can complete the proof of Theorem \ref{singularity_property_T}. Let $E\in MCB_{r,t}(\mathbb{Q})$ be singular. By Lemma \ref{MCB_group} $E$ has no inverse in $MCB_{r,t}(\mathbb{Q})$, implying that $E(P)$ has no inverse in $M_t(\mathcal{R})$. Consequently, $\det(E(P))$ is not invertible and thus $det(E(P))$ and $P^r-1$ have a non-trivial common divisor. But

\begin{equation*}
    P^r-1=\prod_{k\vert{r}}\Psi_k(P)
\end{equation*}

where

\begin{equation*}
    \Psi_k(P)=\prod_{\substack{1\leq l\leq k \\ 
    gcd(l,k)=1}}(P-\omega_k^l)
\end{equation*}

is the $k$-th cyclotomic polynomial. It is a well known fact that $\Psi_k$ is irreducuble over $\mathbb{Q}$ (e.g., \cite{gauss2006untersuchungen}).
Therefore, $\det(E(P))$ and $P^r-1$ have a non-trivial common divisor iff one of the cyclotomic polynomials divides the determinant. i.e., there exists a divisor $k\vert{r}$ s.t.\
$$\Psi_k(P)\big\vert{\det(E(P))}$$ Since $\Psi_k(\omega_k)=0$ 
there exists a divisor $k$ of ${r}$, s.t.\ $$\det(E(P))(\omega_k)=0$$ By Lemma \ref{lemm:determinant_equality} this implies
\begin{equation*}
    \det(\underline{E}(\omega_k))=0
\end{equation*}
completing the proof of Theorem \ref{reduction_small_matrices}.
\end{proof}

\subsection{Computational Aspects of MCB}
Theorem \ref{reduction_small_matrices} has interesting computational aspects. In order to present them, we need some preparations. We recall that
 $d(m)$ denotes the number of distinct divisors of the integer $m$, and that $d(m)=m^{o_m(1)}$, more precisely (see \cite{hardy1979introduction})
 $$\limsup_{n\to\infty}\frac{\log d(n)}{\log n/\log\log n}=\log2$$

\begin{note}\label{not_equivalent_problems}
It is a classical fact (e.g., \cite{petkovic2009generalized}) that matrix multiplication and matrix inversion have essentially the same computational complexity. It is, however, still unknown if these problems are also equivalent to the decision problem whether a given matrix is invertible (e.g., \cite{Blaser_matrix_survey}).
\end{note}

The smallest exponent for matrix multiplication is commonly denoted by $\omega$. This is the least real number such that two $n\times n$ matrices can be multiplied using $O(n^{\omega+\epsilon})$ arithmetic operations for every $\epsilon>0$. Presently the best known bounds \cite{le2014powers} are $2\leq\omega \leq2.373$. 

\begin{proposition}\label{corollary_T}
For every $\epsilon>0$, it is possible to determine in time $$O\left(r^{1+\epsilon}\cdot t^2+r^{\epsilon}\cdot t^{\omega}\right)$$ whether 
a matrix in $MCB_{r,t}(\mathbb{Q})$ is invertible.
\end{proposition}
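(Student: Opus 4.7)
The plan is to apply Theorem \ref{reduction_small_matrices} directly: $E\in MCB_{r,t}(\mathbb{Q})$ is non-singular iff each of the scalar matrices $\underline{E}(\omega_k)$, as $k$ ranges over the divisors of $r$, is non-singular. So the algorithm is to enumerate the divisors $k$ of $r$, form $\underline{E}(\omega_k)$ for each, and test its invertibility; $E$ is non-singular iff all $d(r)$ of these tests succeed.

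For a single fixed divisor $k$, building $\underline{E}(\omega_k)$ amounts to evaluating each of its $t^2$ entries $g_{i,j}(\omega_k)$, where $\deg g_{i,j}<r$. By Horner's rule each such evaluation costs $O(r)$ arithmetic operations, so the whole scalar matrix is assembled in $O(rt^2)$ time. Testing invertibility of a $t\times t$ matrix then costs $O(t^\omega)$, for instance via Gaussian elimination accelerated by fast matrix multiplication, or by computing the determinant. Multiplying by the number of divisors and using the fact $d(r)=r^{o(1)}$ recalled just before the proposition, we obtain the total running time
$$d(r)\cdot\bigl(O(rt^2)+O(t^\omega)\bigr)=O\bigl(r^{1+\epsilon}t^2+r^\epsilon t^\omega\bigr)$$
for every $\epsilon>0$, which is exactly the claimed bound.

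There is no serious obstacle here; the only mild subtlety is the arithmetic model. Evaluating $g_{i,j}$ at a primitive root of unity $\omega_k$ pushes us out of $\mathbb{Q}$, so one has to decide whether to work exactly in the cyclotomic field $\mathbb{Q}(\omega_k)\simeq\mathbb{Q}[x]/\Psi_k(x)$ or to use approximate complex arithmetic with sufficient precision to distinguish zero from non-zero determinants. As is standard for bounds of this shape, the per-operation overhead is absorbed into constants and the count above is measured in field operations. One could also try to share work between different divisors (e.g.\ by evaluating all the $g_{i,j}$ at all $r$-th roots of unity simultaneously via an FFT), but the naive divisor-by-divisor scheme described above already meets the stated complexity.
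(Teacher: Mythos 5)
Your proposal is correct and follows essentially the same route as the paper: enumerate the divisors $k$ of $r$, build each $\underline{E}(\omega_k)$ via Horner's rule in $O(rt^2)$ time, test its invertibility in $O(t^{\omega})$, and invoke $d(r)=r^{o(1)}$ to get the stated bound. Your remark on the arithmetic model (exact cyclotomic versus approximate complex arithmetic) is a fair observation that the paper glosses over, but it does not change the operation count.
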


\begin{note}\label{note:time_comparison}
It follows that when $r\to\infty$, it is easier to decide the invertibility of matrices in $MCB_{r,t}(\mathbb{Q})$ than general $rt\times rt$ matrices, because \[(rt)^{\omega}\gg r^{1+\epsilon}\cdot t^2+r^{\epsilon}\cdot t^{\omega}\]
There is an obvious lower bound of $\Omega(r\cdot t^2)$, which is
the time it takes to read a matrix in $MCB_{r,t}(\mathbb{Q})$.
\end{note}

\begin{proof}(Proposition \ref{corollary_T})
The proof of theorem \ref{reduction_small_matrices} yields an algorithm to decide if $E\in MCB_{r,t}(\mathbb{Q})$ is invertible:
\begin{enumerate}
    \item Produce the matrix $E(P)$ as in equation (\ref{E(P)}).
    \item For each divisor $k$ of $r$:
    \begin{enumerate}[label*=\arabic*.]\label{evaluating small matrices}
        \item Calculate the matrix $\underline{E}(\omega_k)$ as in equation (\ref{underscore(E)(z)}) by evaluating the polynomial matrix $E(P)$ with $\omega_k=\exp(\frac{2\pi i}{k})$. \label{making_small_matrix}
        \item Determine if the $t\times t$ matrix $\underline{E}(\omega_k)$ is invertible. If it is singular, return '$E$ is singular'. \label{using_T}
    \end{enumerate}
    \item If $\underline{E}(\omega_k)$ has full rank for every divisor $k\vert r$, return '$E$ is invertible'.
\end{enumerate}
A circulant block is clearly completely defined by its first row. Therefore the matrix $E(P)$ can be found in time $O(r\cdot t^2)$. To find all the divisors of $r$ we can even factor $r$ using Eratosthenes' sieve with no harm to the complexity. Step \ref{evaluating small matrices} is repeated $d(r)=r^{o(1)}$ times. Horner's Rule allows to evaluate a degree $r$ polynomial with $r$ additions and $r$ multiplications, so step \ref{making_small_matrix}\ takes time $O(r\cdot t^2)$. The running time of step \ref{using_T} is at most $O(t^{\omega})$. All told the combined running time is $$O\left(r\cdot t^2+r+d(r)\cdot(r\cdot t^2+t^{\omega})\right)=O\left(r^{1+\epsilon}\cdot t^2+r^{\epsilon}\cdot t^{\omega}\right)$$
for every $\epsilon>0$. 
\end{proof}

In step \ref{evaluating small matrices} we need to check whether $\underline{E}(\omega_k)$ is invertible for different $k$. These calculations can clearly be done in parallel.

In \cite{tsitsas2007recursive} an iterative algorithm is presented to invert a matrix in $MCB_{r,t}$, with run time

\begin{equation}\label{running_time_tsitsas}
    \begin{cases}
    O(2^{3l}\cdot r + t^2\cdot r^2) & r \text{~is not a power of~} 2\\
    O(2^{3l}\cdot r + t^2\cdot r\log r) & r \text{~is a power of~} 2
    \end{cases}
\end{equation}
where $l:=\lceil\log_2t\rceil$. If we only need to decide whether the matrix is invertible, then the algorithm in Corollary \ref{corollary_T} is faster. We still do not know whether Theorem \ref{reduction_small_matrices} yields an algorithm to invert a matrix in $MCB_{r,t}(\mathbb{Q})$ that is faster then the algorithm from \cite{tsitsas2007recursive}.

\subsection{From $A$ to $MCB$.}\label{from_A_to_MCB_S}
It turns out that there is a rank-preserving transformation of the boundary operator $A=A_{n,c}$ as defined in section \ref{background_simplicial_combinatorics} into a matrix in $MCB_{n-1,\frac{n-3}{2}}$ as defined in (\ref{MCB_definition}). The transformation is fairly simple and only involves reordering of the rows and columns plus removal of $n-1$ rows that are linearly dependent on the other rows and a trivial Gauss elimination of $\frac{n-1}{2}$ rows and columns. 

As mentioned in section \ref{background_simplicial_combinatorics}, and maintaining the same terminology, the boundary operator $A=A_{n,c}$ of $X=X_{2,\n,c}$ is given by an $\binom{n}{2} \times\binom{n-1}{2}$ matrix. We next find a square $\binom{n-1}{2}\times \binom{n-1}{2}$ submatrix of $A$ of rank $\text{rank}(A)$. To this end, we remove $(\n-1)$ rows of $A$ which are linearly spanned by the other $\binom{n-1}{2}$ rows. Rows in $A$ are indexed by edges ($1$-dimensional faces). It is well known and easy to prove that this is the case with any $n-1$ rows that represent the edge set of a spanning tree of the complete graph $K_{\n}$. We apply this with the star rooted at vertex $0$. In other words, we remove the rows corresponding to pairs $\{\{0,j\}\suchthat j\in \mathbb{F}_{\n}^{\ast}\}$.

Nonzero elements of $\mathbb{F}_n$ act on subsets of $\mathbb{F}_n$. Namely, if $u\in\mathbb{F}_n^*$, and $\sigma=\{x_0,\dots,x_k\}\subseteq \mathbb{F}_n$ we denote:
\begin{equation*}
    u\cdot \sigma=u\cdot \{x_0,\dots,x_k\}:=\{u\cdot x_0,\dots,u\cdot x_k\}.
\end{equation*}
We note that $X$ is closed under such action, and linearly extend this definition as $u\cdot(\sigma_1+\sigma_2)=u\cdot\sigma_1+u\cdot\sigma_2$. 

We organize $A$'s rows and columns by blocks corresponding to the orbits under the action of $\mathbb{F}_n^*$. A simple calculation shows that $X$'s $2$-faces form $\frac{n-3}{2}$ orbits of size $n-1$, and one orbit of size $\frac{n-1}{2}$. The latter is comprised of all $2$-faces of the form $u\cdot\{1,-1,0\}$. The $1$-faces (edges) also form $\frac{n-3}{2}$ orbits of size $n-1$, and one orbit of size $\frac{n-1}{2}$ that includes the edges $u\cdot\{1,-1\}$.

For $x\in\mathbb{F}_n^{\ast}$, and $y\in\mathbb{F}_n$, the block called $B_{[x,y]}$ is characterized by having the edge $\{1,x\}$ as a row and the $2$-face $\{1,y,-\frac{1+y}{c}\}$ as a column. We refer to $x$ and $y$ as the row and column {\em leaders} of $B$. This creates some ambiguity, since $\{1,x\}$ and $\{1, x^{-1}\}$ belong to the same block. Between $x$ and $x^{-1}$ the leader is the one with the smaller logarithm (as defined in Section \ref{section:preliminaries}). The same ambiguity and the way around it apply as well to $y$ and $y^{-1}$. We order the rows of the block indexed by $x$ as follows:
 
 \begin{equation}\label{eq:orientation_edges}
     [(1,x),\lambda\cdot(1,x),\lambda^2\cdot(1,x),\dots,\lambda^{n-2}\cdot(1,x)].
 \end{equation}
Likewise, the columns in a block whose column index is $y$ are ordered as follows:
\begin{equation}\label{eq:orientation_faces}
     [(1,y,z),\lambda\cdot(1,y,z),\lambda^2\cdot(1,y,z),\dots,\lambda^{n-2}\cdot(1,y,z)]
 \end{equation}
where $z=-\frac{1+y}{c}$. Equations (\ref{eq:orientation_edges}) and (\ref{eq:orientation_faces}) also represent the orientation that we use for the edges and $2$-faces, indicated by the use of tuples over sets. 

Note that the column of a $2$-face $\{u,-u,0\}$ has a single non-zero entry in row $\{u,-u\}$, since we have removed the rows that correspond to the star centered at vertex $0$. We eliminate these $\frac{n-1}{2}$ rows and columns by (trivial) Gauss elimination as in \cite{aronshtam2013collapsibility} and arrive at 
\begin{equation}\label{eq:S}
    S=S_{n,c}
\end{equation} 
an $\frac{n-3}{2}\cdot (n-1)\times\frac{n-3}{2}\cdot (n-1)$ submatrix of $A$. To recap, $X$ is a hypertree iff $S$ is non-singular. 

\begin{claim}
Every block $B_{[x,y]}$ of $S$ is circulant, i.e., $S\in MCB_{n-1,\frac{n-3}{2}}(\mathbb{Q})$.
\end{claim}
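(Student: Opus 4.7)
The plan is to verify the shift-invariance identity $B_{[x,y]}[i,j]=B_{[x,y]}[i+1,j+1]$ (with indices read modulo $n-1$), since this is exactly the circulant property displayed at the beginning of Section~\ref{section:MCB}. Writing $z=-(1+y)/c$, the $(i,j)$-entry of $B_{[x,y]}$ is by construction the signed coefficient of the oriented edge $\lambda^i\cdot(1,x)$ in $\partial\bigl(\lambda^j\cdot(1,y,z)\bigr)$, where the parenthesised tuples record the reference orientations fixed in equations (\ref{eq:orientation_edges})--(\ref{eq:orientation_faces}).

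The heart of the argument is that the scalar action $\sigma\mapsto u\cdot\sigma$ on ordered tuples commutes with the boundary operator. The one-line calculation
\[
\partial(\lambda a,\lambda b,\lambda c)=(\lambda b,\lambda c)-(\lambda a,\lambda c)+(\lambda a,\lambda b)=\lambda\cdot\partial(a,b,c),
\]
together with the analogous identity on oriented edges, says that multiplying the column representative $\lambda^j\cdot(1,y,z)$ by $\lambda$ and simultaneously multiplying the row representative $\lambda^i\cdot(1,x)$ by $\lambda$ cannot alter the coefficient. This instantly gives $B_{[x,y]}[i+1,j+1]=B_{[x,y]}[i,j]$, and the identity is cyclic of period $n-1$ because $\lambda^{n-1}=1$, which matches the block size exactly.

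Two small bookkeeping points need to be addressed. First, the $n-1$ rows $\lambda^i\cdot(1,x)$ (and similarly columns) must be pairwise distinct, so that the shift $i\mapsto i+1$ is a genuine cyclic permutation of the block's indexing. This is guaranteed by the fact that the $\mathbb{F}_n^\ast$-stabiliser of $\{1,x\}$ is trivial for $x\notin\{0,\pm1\}$; the orbits with non-trivial stabiliser are exactly those represented by $\{u,-u\}$ and $\{u,-u,0\}$, and these were removed in the passage from $A$ to $S$. Second, no sign anomaly arises in the equivariance step because the tuple convention used in (\ref{eq:orientation_edges})--(\ref{eq:orientation_faces}) is manifestly compatible with scaling: the reference orientation at index $i+1$ is literally $\lambda$ times the reference orientation at index $i$. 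I expect the only mildly delicate part of writing this up to be articulating the orientation bookkeeping carefully; the algebraic content reduces to a single equivariance identity, and there is no deeper obstacle.
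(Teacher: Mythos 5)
Your argument is correct and is essentially the paper's own proof: both rest on the equivariance $\partial_2(\lambda\cdot\sigma)=\lambda\cdot\partial_2(\sigma)$ combined with the $\lambda$-power indexing of rows and columns within each block, which gives $B_{[x,y]}[i,j]=B_{[x,y]}[i+k,j+k]$ directly. Your two bookkeeping points (trivial stabilisers of the surviving orbits after the row/column removals, and compatibility of the tuple orientations with scaling) are left implicit in the paper but are exactly the right things to check.
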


\begin{proof}

The boundary operator is a signed inclusion matrix where the column that corresponds to the oriented face $(u,v,w)$ is
$$e_{(u,v)}-e_{(u,w)}+e_{(v,w)}$$
Where for an oriented edge $(u,v)$ we define $e_{(u,v)}$ to be the $0/1$ column vector with a single 1 in position $(u,v)$. If the direction is opposite, i.e. the edge $(v,u)$ is present and not $(u,v)$, then $e_{(v,u)}=-e_{(u,v)}$. Note that the column that corresponds to the oriented face $(1,y,z)$ is
\begin{equation}\label{eq:first_boundary}
    S[:,(1,y,z)]=e_{(1,y)}-e_{(1,z)}+e_{(y,z)}
\end{equation}
while the column that corresponds to the oriented face in the same block\\ 
$\lambda^k\cdot(1,y,z)$ is
\begin{equation}\label{eq:second_boundary}
    S[:,\lambda^k\cdot(1,y,z)]=e_{\lambda^k\cdot(1,y)}-e_{\lambda^k\cdot(1,z)}+e_{\lambda^k\cdot(y,z)}
\end{equation}

To show the blocks in $S$ are circulant, We need to show that
\begin{equation*}
    B_{[x,y]}[i,0]=B_{[x,y]}[i+k,k]
\end{equation*}
Where addition is done modulo $n-1$. Before we give this proof, let us illustrate it with our running example:

\subsection*{Running Example}
Our chosen generator of $\mathbb{F}_{13}^*$ is $\lambda=2$. The next matrix $B_{[3,4]}$ is an example of a block matrix in $S$ whose row leader is $3$ and column leader $4$:

\medskip
$$B_{[3,4]}=$$
\begin{adjustwidth}{-1in}{-1in}

\begin{tabular}{|l|ccccccc|}
\hline
\diagbox{\text{Edge}}{\text{2-Face}} & $(1,4,12)$ & $\begin{array}{@{}c@{}}2\cdot(1,4,12) \\ =(2,8,11)\end{array}$ & 
$\begin{array}{@{}c@{}}2^2\cdot(1,4,12) \\ =(4,3,9)\end{array}$
 & 
... &
$\begin{array}{@{}c@{}}2^9\cdot(1,4,12) \\ =(5,7,8)\end{array}$ &
$\begin{array}{@{}c@{}}2^{10}\cdot(1,4,12) \\ =(10,1,3)\end{array}$ &
$\begin{array}{@{}c@{}}2^{11}\cdot(1,4,12) \\ =(7,2,6)\end{array}$ 
\\
\hline
(1,3) & 0 & 0 & 0 & \dots & 0 & 1 & 0 \\
$2\cdot(1,3)=(2,6)$ & $0$ & $0$ & $0$ &  & $0$ & $0$ & $1$ \\
$2^2\cdot(1,3)=(4,12)$ & $1$ & $0$ & $0$ &  & $0$ & $0$ & $0$  \\
$2^3\cdot(1,3)$=$(8,11)$ & $0$ & $1$ & $0$ &  & $0$ & $0$ & $0$  \\
$2^4\cdot(1,3)=(3,9)$ & $0$ & $0$ & $1$ & & $0$ & $0$ & $0$  \\
$\vdots$ & $\vdots$ &  &  & $\ddots$ & $0$ & $0$ & $0$ \\
$2^{11}\cdot(1,3)=(7,8)$ & $0$ & $0$ & $0$ & $\dots$ & $1$ & $0$ & $0$ \\
\hline
\end{tabular}

\end{adjustwidth}

\vspace{2mm}

For example, the leader column of $(1,4,12)$ has non-zero entries in rows $(1,4),(1,12)$ and $(4,12)$. In $B_{[3,4]}$ only row $(4,12)$ is present. Indeed $B_{[3,4]} = P^2$ is a circulant matrix.

\vspace{2mm}
\line(1,0){350}

\begin{gather*}
    B_{[x,y]}[i,0]=S[\lambda^i\cdot(1,x),(1,y,z)]=S[\lambda^{i+k}\cdot(1,x),\lambda^k(1,y,z)]= \\
    =B_{[x,y]}[i+k,k]
\end{gather*}
The second equality stems from the indexation of the rows and $2$-faces in the block as in Equations (\ref{eq:orientation_edges}), (\ref{eq:orientation_faces}) and the definition of the boundary matrix for the appropriate columns in Equations (\ref{eq:first_boundary}), (\ref{eq:second_boundary}). 
 
The fact that the blocks $B_{[x,y]}$ are circulant, follows also from the invariance of the boundary operator under the action of $\mathbb{F}_n^*$. 
\begin{equation*}
    \lambda\cdot\partial_2\left(\sigma\right)=\partial_2\left(\lambda\cdot\sigma\right)
\end{equation*}
for
\begin{gather*}
    \lambda\cdot\partial_2\left(\sigma\right) = \lambda\cdot\left((u_0,u_1)-(u_0,u_2)+(u_1,u_2)\right)=\\
    = (\lambda\cdot u_0,\lambda\cdot u_1)-(\lambda\cdot u_0,\lambda\cdot u_2)+(\lambda\cdot u_1,\lambda\cdot u_2)= \\
    =\partial_2(\lambda\cdot u_0,\lambda\cdot u_1,\lambda\cdot u_2) = \partial_2\left(\lambda\cdot\sigma\right)
\end{gather*}

As $S$ is a submatrix of the matrix form of $\partial_d$, the claim follows from the indexation of the rows and columns.

\end{proof}
\section{$S$ as a polynomial matrix}\label{polynomials in the blocks}
This section we find out how to express the matrix $S=S_{n,c}$ as defined in (\ref{eq:S}) as a polynomial matrix.
This matrix is a sum of three sparse matrices, each having at most one nonzero block per row and column. All these blocks have the form $\pm P^j$ for some $n-2\ge j\ge 0$, because the boundary operator is a signed inclusion matrix. Thus the column of the oriented triple $(1,y,z)$ may only contain terms corresponding to $(1,y)$, to $(1,z)$ and to $(y,z)$. But any particular row may be labeled by either $x$ or $x^{-1}$, so there are (at most) three possible cases where the block $B_{[x,y]}$ is nonzero indexed as follows
\begin{itemize}
\item $i=1$, $y\in \{x, x^{-1}\}$
\item $i=2$, $z\in \{x, x^{-1}\}$
\item $i=3$, $zy^{-1}\in \{x, x^{-1}\}$.
\end{itemize}

In case $i$ we refer to the relevant $x$ as $x_i$. To sum up

\begin{equation*}
B_{[x,y]}=T_1+T_2+T_3
\end{equation*}

where

\begin{equation*}
    T_1=
    \begin{cases}
        I & y=x \\
        -P^{-\log x} & y=x^{-1} 
    \end{cases} 
\end{equation*}

\begin{equation*}
    T_2=
    \begin{cases}
        -I & z=x \\
        P^{-\log x} & z=x^{-1}
    \end{cases}
\end{equation*}

\begin{equation*}
    T_3=
    \begin{cases}
        P^{\log y} & z\cdot y^{-1}=x \\
        -P^{\log z} & z\cdot y^{-1}=x^{-1} 
    \end{cases} 
\end{equation*}
    
Most columns indeed have exactly three nonzero terms, although two of them possibly reside in the same block. We return again to our running example for illustration.

\subsection*{Running Example}
The next table shows the logarithm to base $\lambda=2$ of every $x\in \mathbb{F}_{13}^*$, and its order $o(x)$:

\begin{equation*}
    \begin{tabular}{|c|cccccccccccc|}
    \hline
    x & 1 & 2 & 4 & 8 & 3 & 6 & 12 & 11 & 9 & 5 & 10 & 7 \\
    \hline
    log(x) & 0 & 1 & 2 & 3 & 4 & 5 & 6 & 7 & 8 & 9 & 10 & 11 \\
    \hline
    o(x) & 1 & 12 & 6 & 4 & 3 & 12 & 2 & 12 & 3 & 4 & 6 & 12 \\
    \hline
    \end{tabular}
\end{equation*}
\vspace{1mm}

We next present the matrix $S=S_{13,5}$ in block form. To the left and above the matrix appear the blocks' edge and $2$-face leaders. For example, \mbox{$(1,4,12)\in X$}, since $1+4+5\cdot12\equiv0\ mod\ 13$ and it is the $2$-face leader of the first column. No leader has the form $(1,2,z)$, since $1+2+5\cdot2\equiv0$, i.e., $z=y=2$.

\begin{equation*}
    S=
    \begin{tabular}{|l|ccccc|}
\hline
\diagbox{Edge}{2-Face} & (1,4,12) & (1,8,6) & (1,3,7) & (1,6,9) & (1,0,5) \\
\hline
(1,2) & $0$ & $0$ & $P^{11}$ & $0$ & $0$ \\
(1,4) & $I$ & $P^{3}$ & $0$ & $0$ & $0$ \\
(1,8) & $0$ & $I$ & $0$ & $P^{5}$ & $P^{9}$ \\
(1,3) & $P^{2}$ & $0$ & $I$ & $P^{8}$ & $0$ \\
(1,6) & $0$ & $-I$ & $-P^{11}$ & $I$ & $0$ \\
\hline
\end{tabular}
\end{equation*}
\vspace{1mm}

The block $B_{[3,4]}$ from our previous discussion of the running example resides in the first column and fourth row. The matrix
$S$ is $60\times 60$, being a $5\times 5$ matrix of $12\times 12$ blocks, whence $\underline{S}(z)$ is a $5\times5$ matrix. For example where $z=\omega_3$:

\begin{equation*}
    \underline{S}(\omega_3)=
    \begin{tabular}{|l|ccccc|}
\hline
\diagbox{Edge}{2-Face} & (1,4,12) & (1,8,6) & (1,3,7) & (1,6,9) & (1,0,5) \\
\hline
(1,2) & $0$ & $0$ & $\omega_3^{2}$ & $0$ & $0$ \\
(1,4) & $1$ & $1$ & $0$ & $0$ & $0$ \\
(1,8) & $0$ & $1$ & $0$ & $\omega_3^{2}$ & $1$ \\
(1,3) & $\omega_3^{2}$ & $0$ & $1$ & $\omega_3^{2}$ & $0$ \\
(1,6) & $0$ & $-1$ & $-\omega_3^{2}$ & $1$ & $0$ \\
\hline
\end{tabular}
\end{equation*}
\vspace{2mm}
\hrule

\section{Non-acyclic Complexes}\label{section:Non-acyclic Complexes}
In this section we present an infinite family of non-acyclic $2$-dimensional complexes. As subsequently discussed in the section, we suspect that asymptotically almost every non-acyclic complex is in this family and asymptotically almost every complex that is not in this family is acyclic.  

Before we move on to the main subject of this section, we prove the following curious connection between $o(c)$ and $\log(c)$:

\begin{lemma}\label{lemma:equal_gcd}
For $n$ prime and $c\in\mathbb{F}_n^*$, $\text{gcd}\left(\log(c),\frac{n-1}{2}\right)=\text{gcd}\left(\frac{n-1}{o(c)},\frac{n-1}{2}\right)$
\end{lemma}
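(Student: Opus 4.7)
The plan is to reduce the statement to a routine gcd manipulation using Lemma \ref{lemma_gcd_in_background}. First I would set $k := \log(c)$, so by definition $c = \lambda^k$. Applying Lemma \ref{lemma_gcd_in_background} to $x = \lambda$ and $j = k$ (recalling that $o(\lambda) = n-1$ since $\lambda$ generates $\mathbb{F}_n^*$) gives
$$o(c) = o(\lambda^k) = \frac{n-1}{\gcd(k, n-1)},$$
which rearranges to the key identity
$$\frac{n-1}{o(c)} = \gcd(\log(c), n-1).$$

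Next, substituting this into the right-hand side of the claim, the desired equality becomes
$$\gcd\!\left(\log(c), \tfrac{n-1}{2}\right) = \gcd\!\left(\gcd(\log(c), n-1), \tfrac{n-1}{2}\right).$$
This is now a purely elementary statement about gcds. I would verify it by the standard associativity rule $\gcd(\gcd(a,b),c) = \gcd(a, \gcd(b,c))$, applied with $a = \log(c)$, $b = n-1$, $c = \frac{n-1}{2}$, to obtain
$$\gcd\!\left(\gcd(\log(c), n-1), \tfrac{n-1}{2}\right) = \gcd\!\left(\log(c), \gcd\!\left(n-1, \tfrac{n-1}{2}\right)\right) = \gcd\!\left(\log(c), \tfrac{n-1}{2}\right),$$
where the last equality uses that $\tfrac{n-1}{2}$ divides $n-1$ (which is valid since $n$ is an odd prime, so $n - 1$ is even).

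There is no substantive obstacle here; the content lies entirely in noticing that the right-hand side of the lemma is precisely $\gcd(\log(c), n-1)$ restricted (via a second gcd) to $\tfrac{n-1}{2}$, and that iterated gcds collapse when one argument divides another. The only edge case to mention is $n = 2$, which is excluded because the lemma's setting $c \in \mathbb{F}_n^*$ combined with the running assumption $c \notin \{0, 1, -1, -d\}$ forces $n$ to be odd.
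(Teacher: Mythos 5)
Your proof is correct, but it takes a genuinely different and arguably cleaner route than the paper's. You apply Lemma \ref{lemma_gcd_in_background} to the generator $\lambda$ itself to extract the exact identity $\frac{n-1}{o(c)}=\gcd\left(\log(c),n-1\right)$, after which the lemma collapses to the elementary nesting rule $\gcd\left(\gcd(a,b),c\right)=\gcd\left(a,\gcd(b,c)\right)$ together with $\frac{n-1}{2}\mid n-1$. The paper instead proves the two divisibility directions separately: for $k_2\mid k_1$ it only uses the weaker fact $\frac{n-1}{o(c)}\mid\log(c)$, and for $k_1\mid k_2$ it applies Lemma \ref{lemma_gcd_in_background} to $x=\lambda^2$ to express $\gcd\left(\log(c),\frac{n-1}{2}\right)$ as $\frac{n-1}{2\,o(c^2)}$, followed by a case split on the parity of $o(c)$. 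Your observation that the right-hand side is \emph{literally} equal to $\gcd(\log(c),n-1)$ is the stronger statement and renders the two-sided argument and the parity case analysis unnecessary; the paper's route, on the other hand, keeps the quantity $o(c^2)$ explicit, which is of no further use here. Your handling of the $c=1$ case (where $\log(c)=0$) goes through under the convention $\gcd(0,m)=m$, and your remark that $n$ must be odd for $\frac{n-1}{2}$ to make sense is accurate, though it follows simply from $n$ being an odd prime rather than from the paper's running assumptions on $c$.
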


\begin{proof}
Let $k_1=\text{gcd}\left(\log(c),\frac{n-1}{2}\right)$ and $k_2=\text{gcd}\left(\frac{n-1}{o(c)},\frac{n-1}{2}\right)$
\begin{itemize}
    \item $k_2\big\vert{k_1}$:
    
    $\lambda^{\log(c)}=c$ so $\lambda^{\log(c)\cdot o(c)}=1$. But $\lambda$ is a generator whence $$(n-1)\big\vert\log(c)\cdot o(c)$$ thus 
    
    \begin{equation}\label{eq:k_2_vert_k_1_eq}
        \frac{n-1}{o(c)}\Big\vert\log(c)
    \end{equation}
    
    Since $k_2\vert{\frac{n-1}{o(c)}}$ and $k_2\vert{\frac{n-1}{2}}$, from equation (\ref{eq:k_2_vert_k_1_eq}) we conclude that 
    $$k_2\big\vert{\text{gcd}\left(\log(c),\frac{n-1}{2}\right)}=k_1$$
    \item $k_1\big\vert k_2:$
    
It is sufficient to prove that $k_1\vert\frac{n-1}{o(c)}$. We apply Lemma \ref{lemma_gcd_in_background} with $x=\lambda^2$ and $j=\log(c)$ and conclude that
    \begin{equation}\label{plug_in_gcd}
        o\left(\lambda^{2\log(c)}\right)=\frac{o(\lambda^2)}{\gcd\left(\log(c),o(\lambda^2)\right)}    
    \end{equation}
But $o(\lambda^2)=\frac{n-1}{2}$ and $o\left(\lambda^{2\log(c)}\right)=o(c^2)$. Rearranging Equation (\ref{plug_in_gcd}) gives
    $$k_1=\gcd\left(\log(c),\frac{n-1}{2}\right)=\frac{n-1}{2\cdot o(c^2)}$$
    Lemma \ref{lemma_gcd_in_background} also tells us that if $o(c)$ is even, $o(c^2)=\frac{o(c)}{2}$ and if $o(c)$ is odd, $o(c^2)=o(c)$. In either case $k_1\vert\frac{n-1}{o(c)}$, as required.
\end{itemize}
\end{proof}

\begin{corollary}\label{corrolary_gcd}
For $n$ prime and $c\in\mathbb{F}_n^*$, $\gcd\left(\log(-c), \frac{n-1}{2}\right)=\gcd\left(\frac{n-1}{o(c)},\frac{n-1}{2}\right)$
\end{corollary}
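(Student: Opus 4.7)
The plan is to reduce the corollary to Lemma~\ref{lemma:equal_gcd} by showing that replacing $c$ by $-c$ does not change the gcd on the left-hand side, i.e., that $\gcd(\log(-c),\tfrac{n-1}{2})=\gcd(\log(c),\tfrac{n-1}{2})$. Once this is established the corollary is immediate: by Lemma~\ref{lemma:equal_gcd} applied to $c$ (note the right-hand side depends only on $o(c)$, which equals $o(-c)$ only in special cases, so we go through $c$ not $-c$), the common value equals $\gcd(\tfrac{n-1}{o(c)},\tfrac{n-1}{2})$.

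The key observation is that $-1$ is the unique element of order $2$ in $\mathbb{F}_n^*$, so $\lambda^{(n-1)/2}=-1$ and hence $\log(-1)=\tfrac{n-1}{2}$ (here we tacitly assume $n$ is odd, which is the only nontrivial case). Since the logarithm is a group homomorphism modulo $n-1$, we obtain
\begin{equation*}
\log(-c)\equiv \log(c)+\tfrac{n-1}{2}\pmod{n-1}.
\end{equation*}
In particular, $\log(-c)-\log(c)$ is a multiple of $\tfrac{n-1}{2}$, so
\begin{equation*}
\log(-c)\equiv \log(c)\pmod{\tfrac{n-1}{2}}.
\end{equation*}

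Since gcd with a modulus is invariant under adding multiples of that modulus, this congruence yields $\gcd(\log(-c),\tfrac{n-1}{2})=\gcd(\log(c),\tfrac{n-1}{2})$. Combining with Lemma~\ref{lemma:equal_gcd} completes the proof. There is no genuine obstacle here: the whole argument is a short manipulation once one spots that $-1$ has logarithm $\tfrac{n-1}{2}$, so the $-c$ version of the statement differs from the $c$ version by a shift that is absorbed by reducing modulo $\tfrac{n-1}{2}$.
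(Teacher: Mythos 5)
Your proposal is correct and follows essentially the same route as the paper: both reduce the corollary to Lemma~\ref{lemma:equal_gcd} by observing that $\log(-1)=\frac{n-1}{2}$ (up to sign/reduction mod $n-1$), so $\log(-c)$ and $\log(c)$ differ by a multiple of $\frac{n-1}{2}$ and hence have the same gcd with $\frac{n-1}{2}$. Your version is slightly more careful about the modular arithmetic, but the argument is identical in substance.
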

\begin{proof}
It is sufficient to prove that $\gcd\left(\log(-c), \frac{n-1}{2}\right)=\gcd\left(\log(c), \frac{n-1}{2}\right)$.
Since $\log(-1)=\pm\frac{n-1}{2}$ it follows that $\log(-c)=\log(c)\pm\frac{n-1}{2}$. This yields
$$\gcd\left(\log(-c), \frac{n-1}{2}\right)=\gcd\left(\log(c)\pm\frac{n-1}{2}, \frac{n-1}{2}\right)=\gcd\left(\log(c), \frac{n-1}{2}\right)$$
as claimed.
\end{proof}

\begin{theorem}\label{singularity_property_T}
The complex $A_{n,c}$ is non-acyclic in the following cases:
\begin{itemize}
\item
$n\equiv 1\bmod 4$ and $c$ is not a primitive element of $\mathbb{F}_n$
\item
$n\equiv 3\bmod 4$ and $c$ is neither primitive nor of order $\frac{n-1}{2}$
\end{itemize}
\end{theorem}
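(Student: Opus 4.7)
The plan is to invoke Theorem~\ref{reduction_small_matrices}: it suffices to exhibit a divisor $k\mid n-1$ for which $\underline{S}(\omega_k)$ is singular. I would first set $d:=\gcd(\log(-c),(n-1)/2)$ and, using Corollary~\ref{corrolary_gcd}, rewrite it as $\gcd(m,(n-1)/2)$ with $m:=(n-1)/o(c)$. Non-primitivity of $c$ gives $m\ge 2$. When $n\equiv 1\bmod 4$, the integer $(n-1)/2$ is even: an even $m$ shares the factor $2$, while any odd prime divisor of $m$ also divides $(n-1)/2$ (since the odd parts of $n-1$ and of $(n-1)/2$ coincide), so $d>1$. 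When $n\equiv 3\bmod 4$, $(n-1)/2$ is odd, and the excluded order $o(c)=(n-1)/2$ corresponds exactly to $m=2$; for any other non-primitive $c$ we have $m\ge 3$, and $m$ is either odd (so $m\mid (n-1)/2$) or of the form $2m'$ with $m'\ge 3$ odd (so $m'\mid (n-1)/2$), giving $d>1$ again.

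With $d\ge 2$ (and $d\mid (n-1)/2\mid n-1$) in hand, I take $k=d$ and analyse $\underline{S}(\omega_d)$. The key algebraic observation is that the character $\chi\colon \lambda^j\mapsto \omega_d^{-j}$ of $\mathbb{F}_n^*$ satisfies
\begin{equation*}
  \chi(-1) \;=\; \omega_d^{-(n-1)/2} \;=\; 1
  \qquad\text{and}\qquad
  \chi(-c) \;=\; \omega_d^{-\log(-c)} \;=\; 1,
\end{equation*}
so $\chi(c)=1$ as well; thus $\chi$ is trivial on the subgroup $\langle -1,c\rangle\subseteq\mathbb{F}_n^*$. By Claim~\ref{into_block_diagonal} applied to $S$, the matrix $\underline{S}(\omega_d)$ represents the boundary operator of $X$ restricted to the $\chi$-isotypic component of $C_*(X;\mathbb{C})$ under the multiplicative $\mathbb{F}_n^*$-action. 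Consequently, a null vector $(a_y)$ of $\underline{S}(\omega_d)$ is exactly the data of a $\chi$-equivariant $2$-cycle
\begin{equation*}
  \tau \;=\; \sum_y a_y\,\widehat{\sigma}^{\,y}_\chi,
  \qquad
  \widehat{\sigma}^{\,y}_\chi \;:=\; \sum_{g\in\mathbb{F}_n^*} \chi(g)^{-1}\, g\cdot (1,y,z_y),
\end{equation*}
where $(1,y,z_y)$ is a representative of the face-orbit with leader $y$.

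To produce such $\tau$ I would write $\partial\tau=0$ as one linear equation per edge-orbit leader $x$, using the explicit $T_1,T_2,T_3$ formulas of Section~\ref{polynomials in the blocks}: each face-orbit whose boundary touches the orbit of $(1,x)$ contributes a term of the form $\pm a_y\chi(\cdot)$, with the $\chi$-weight coming from the rescaling $\widehat{e}^{\,(y,z)}_\chi=\chi(y)\widehat{e}^{\,(1,z/y)}_\chi$ and from the orientation identity $\widehat{e}^{\,(1,y^{-1})}_\chi=-\chi(y)^{-1}\widehat{e}^{\,(1,y)}_\chi$. The triviality of $\chi$ on $\langle -1,c\rangle$ identifies many of these coefficient values and collapses the $(n-3)/2$ equations to a system with nontrivial redundancy. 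The main technical obstacle is to show that the surviving constraints always admit a nontrivial solution rather than accidentally pinning $\tau=0$; I would approach this by identifying an explicit combinatorial template for $(a_y)$ parameterised by the orbits of $\langle -1,c\rangle$ on face-orbit representatives (a direct calculation on the Running Example $n=13$, $c=5$, $d=3$ yields the kernel vector $(-1,1,0,1,\omega_3)$ for the displayed $\underline{S}(\omega_3)$, which suggests the right template). Once $\tau$ is produced, Theorem~\ref{reduction_small_matrices} immediately yields that $S$, and hence $A_{n,c}$, is singular, so $X=X_{2,n,c}$ fails to be $\mathbb{Q}$-acyclic.
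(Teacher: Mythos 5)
Your reduction to finding a divisor $k\ge 2$ of both $\frac{n-1}{o(c)}$ and $\frac{n-1}{2}$ is correct and matches the paper's opening step, and your character-theoretic reading of $\underline{S}(\omega_k)$ as the boundary map on the $\chi$-isotypic component is a valid (and illuminating) reformulation of Claim \ref{into_block_diagonal} combined with the $\mathbb{F}_n^*$-equivariance of $\partial_2$. But there is a genuine gap exactly where you flag ``the main technical obstacle'': you never actually produce a nonzero kernel vector of $\underline{S}(\omega_k)$ for general $n,c$. Observing that $\chi$ is trivial on $\langle -1,c\rangle$ and that this ``collapses the equations with nontrivial redundancy'' does not by itself show that the collapsed system admits a nontrivial solution, and a single worked example ($n=13$, $c=5$) together with the promise of ``an explicit combinatorial template'' is a plan rather than an argument. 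Since the entire content of the theorem is precisely this existence statement, the proposal as written does not establish it.

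For comparison, the paper closes this gap with a completely explicit \emph{left} kernel vector $v_{n,k}$, indexed by the edge-orbit leaders $x$, with $v_{n,k}[x]=1-\omega_k^{\log(x)}$. The verification that $v_{n,k}\cdot\underline{S}(\omega_k)=0$ is a short telescoping computation: by the case analysis of Section \ref{polynomials in the blocks}, the three contributions to the column of $(1,y,z)$ are $1-\omega_k^{\log(y)}$, $\omega_k^{\log(z)}-1$ and $\omega_k^{\log(y)}-\omega_k^{\log(z)}$ (Lemma \ref{different_cases_are_equal}), which sum to zero; the two degenerate columns --- the one with $z=-1$ (whose row $(1,-1)$ was deleted) and the one with $y=0$ --- are exactly where the hypotheses $k\mid\frac{n-1}{2}$ and $k\mid\log(-c)$ (via Corollary \ref{corrolary_gcd}) are used. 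If you want to complete your approach, the most direct route is to guess and verify such a closed-form vector rather than to argue abstractly about the rank of the collapsed system; note also that your example vector lies in the right kernel, whereas the uniform closed form is most naturally written on the left.
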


\begin{proof}

Note that in both cases $\gcd\left(\frac{n-1}{o(c)},\frac{n-1}{2}\right)>1$. When $n\equiv 1\bmod 4$, the integer $\frac{n-1}{2}$ has all the prime factors of $n-1$, so $\frac{n-1}{o(c)}$ and $\frac{n-1}{2}$ are relatively prime only if $o(c)=n-1$, i.e., $c$ is primitive. When $n\equiv 3\bmod 4$, the only divisor of $n-1$ that fails to divide $\frac{n-1}{2}$ is $2$. Therefore $\frac{n-1}{o(c)}$ and $\frac{n-1}{2}$ are relatively prime only if $c$ is primitive or $o(c)=\frac{n-1}{2}$.

So let $k\ge 2$ be a common divisor of $\frac{n-1}{o(c)}$ and $\frac{n-1}{2}$.
We show below that the vector $v_{n,k}$ whose $x$ entry is

\begin{equation*}
    v_{n,k}[x]=1-\omega_k^{\log(x)}
\end{equation*}
is in the left kernel of $\underline{S}(\omega_k)$. We first illustrate this in our running example:

\subsection*{Running Example}
Theorem \ref{singularity_property_T} applies to our running example, since $o(5)=4$, and $\text{gcd}(\frac{12}{4},\frac{12}{2})=3>1$. It is easily verified that the following vector is in the left kernel of $\underline{S}(\omega_3)$:
\begin{equation*}
    v_{13,3}=
    \begin{pmatrix}
    1-\omega_3^1 & 1-\omega_3^2 & 0 & 1-\omega_3^1 & 1-\omega_3^2 
    \end{pmatrix} 
\end{equation*}

\begin{equation*}
    \begin{pmatrix}
    1-\omega_3^1 & 1-\omega_3^2 & 0 & 1-\omega_3^1 & 1-\omega_3^2 
    \end{pmatrix} \cdot
    \begin{pmatrix}
    0 & 0 & \omega_3^{2} & 0 & 0 \\
1 & 1 & 0 & 0 & 0 \\
0 & 1 & 0 & \omega_3^{2} & -1 \\
\omega_3^{2} & 0 & 1 & \omega_3^{2} & 0 \\
0 & -1 & -\omega_3^{2} & 1 & 0 \\
    \end{pmatrix}=\vv{0}
\end{equation*}
\vspace{2mm}
\hrule
\vspace{2mm}

For $y\in \mathbb{F}_n^*$ we compute the $y$-th entry of $v_{n,k}\cdot\underline{S}(\omega_k)$, i.e., the coordinate that corresponds to the $2$-face $(1,y,z)$ 

\begin{equation}\label{dot_product_with_a_row}
    (v_{n,k}\cdot\underline{S}(\omega_k))[y]=\sum_{i=1,2,3}(1-\omega_k^{\log(x_i)})\cdot T_i(\omega_k)
\end{equation}

We define $\Theta_{k,i}$ as the scalar term that is obtained upon evaluating $T_i$ at $\omega_k$.

\begin{lemma}\label{different_cases_are_equal}
Let $t_{k,i}:=(1-\omega_k^{\log(x_i)})\cdot \Theta_{k,i}$ be the $i$-th term in (\ref{dot_product_with_a_row}). Then
\begin{enumerate}
    \item $t_{k,1}=1-\omega_k^{\log(y)}$
    \item $t_{k,2}=\omega_k^{\log(z)}-1$
    \item $t_{k,3}=\omega_k^{\log(y)}-\omega_k^{\log(z)}$
\end{enumerate}
\end{lemma}

\begin{proof} Let us go through the cases:

\begin{enumerate}
    \item If $x_1=y$, then $\Theta_{k,1}=1$\\
    If $x_1=y^{-1}$, then $\Theta_{k,1}=-\omega_k^{-\log x_1}$.
    
    \item If $x_2=z$, then $\Theta_{k,2}=-1$\\
    If $x_2=z^{-1}$, then $\Theta_{k,2}=\omega_k^{-\log x_2}$.
    
    \item If $x_3=z\cdot y^{-1}$, then $\Theta_{k,3}=\omega_k^{\log y}$\\
    If $x_3=z^{-1}\cdot y$, then $\Theta_{k,3}=-\omega_k^{\log z}$.  
\end{enumerate}
All cases are readily verifiable.
\end{proof}

It is easy to check that $t_{k,1}+t_{k,2}+t_{k,3}=0$ for those columns $y$ where $T_1, T_2, T_3$ are well defined. We turn to deal with the exceptional cases where some of the definitions fail.

The matrix $T_2$ is undefined for the column of $y=c-1, z=-1$, since row $(1,-1)$ is absent. But then

$$t_{k,1}+t_{k,3}=1-\omega_k^{\log(y)}+\omega_k^{\log(y)}-\omega_k^{\log(z)}
    =1-\omega_k^{\log(-1)}=1-\omega_k^{\frac{n-1}{2}}=0,$$

since $k\vert{\frac{n-1}{2}}$.

Also, $T_1, T_3$ are undefined when $y=0, z=-c^{-1}$. However, in this case $t_{k,2}=\omega_k^{-\log(-c)}-1=0$, a deduction from Corollary \ref{corrolary_gcd} since $k\big\vert{\log(-c)}$.

\end{proof}

\subsection{A Conjecture about a single criterion}

Again let $n$ be a prime, and $c\neq 0,\pm 1, -2$. As illustrated in Figure \ref{fig:acyclic_probability},
our computer simulations suggest that for large $n$ Theorem \ref{singularity_property_T} captures asymptotically almost all cases in which $X_{2,n,c}$ is non-acyclic. For the acyclic cases, asymptotically almost all other cases that Theorem \ref{singularity_property_T} does not capture are acyclic. 

\begin{figure}[!htb]
    \centering
    \includegraphics[width=1\textwidth]{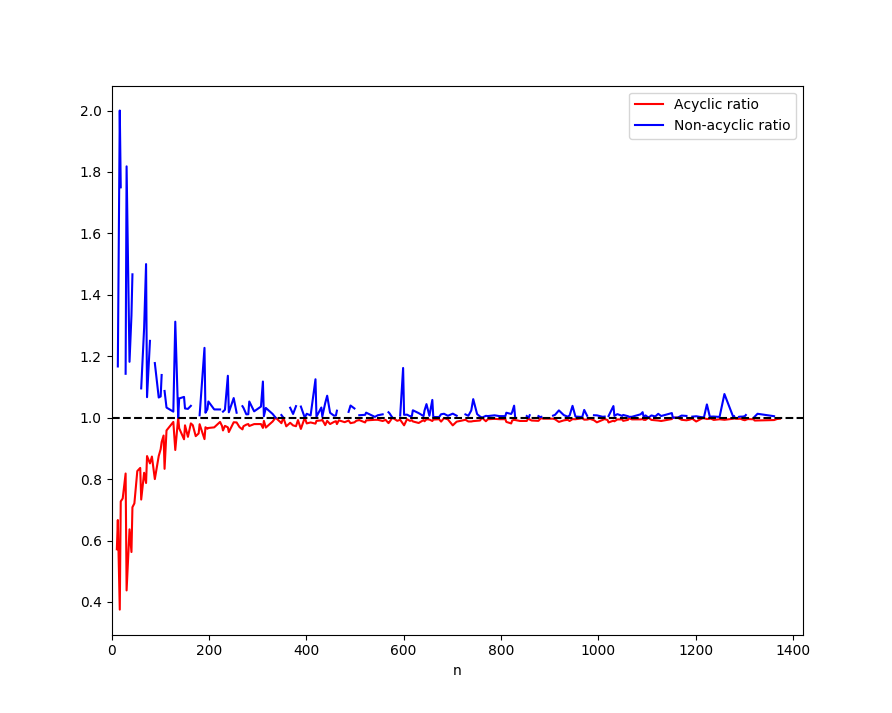}
    \caption{
    Acyclic and Non-Acyclic ratios for all primes $11\leq n\leq 1373$. The acyclic ratio is $A_n/(n-4-E_n)$ and the non-acyclic ratio is $N_n/E_n$. Here $A_n$ and $N_n$ are the number of $c\in\mathbb{F}_n^*\setminus\{\pm1,-2\}$ for which $X_{2,n,c}$ is acyclic, resp.\ non-acyclic. $E_n$ is the number of non-acyclic complexes explained by Theorem \ref{singularity_property_T}. 
    } 
    \label{fig:acyclic_probability}
\end{figure}

This suggests that the following simple criterion asymptotically determines whether or not a complex is acyclic. The asymptotics is w.r.t.\ $n\to\infty$.

\begin{conjecture}\label{conj}
Let the complex $X_{2,n,c}$ with $n$ prime and $c\in\mathbb{F}_n^*$ be as above.
\begin{itemize}
\item
When $n\equiv 1\bmod 4$:
\begin{itemize}
\item 
Recall that $X_{2,n,c}$ is non-acyclic if $c\in\mathbb{F}_n^*$ is non-primitive.
\item
We conjecture that $X_{2,n,c}$ is acyclic for asymptotically almost every primitive $c\in\mathbb{F}_n^*$.
\end{itemize}
\item
When $n\equiv 3\bmod 4$:
\begin{itemize}
\item 
Recall that $X_{2,n,c}$ is non-acyclic if $c\in\mathbb{F}_n^*$ is neither primitive, nor of order $\frac{n-1}{2}$.
\item
We conjecture that $X_{2,n,c}$ is acyclic for asymptotically almost every $c\in\mathbb{F}_n^*$ that is either primitive or of order $\frac{n-1}{2}$.
\end{itemize}
\end{itemize}
\end{conjecture}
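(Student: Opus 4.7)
The plan is to combine Theorem~\ref{reduction_small_matrices} with a counting argument over the divisors of $n-1$. By that theorem applied to the MCB structure of $S_{n,c}$ derived in Section~\ref{from_A_to_MCB_S}, the complex $X_{2,n,c}$ is acyclic if and only if $\underline{S}_{n,c}(\omega_k)$ is non-singular for every $k\mid(n-1)$ with $k\geq 2$. The proof of Theorem~\ref{singularity_property_T} produces an explicit left kernel vector $v_{n,k}$ whenever $k$ divides $\gcd((n-1)/o(c),(n-1)/2)$; for the ``good'' $c$ singled out by the conjecture this gcd equals $1$, so none of these automatic obstructions is present. What remains, therefore, is to show that for almost every good $c$ and every $k\geq 2$ dividing $n-1$, no \emph{exotic} left kernel vector of $\underline{S}_{n,c}(\omega_k)$ exists.

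Concretely, fix a divisor $k\geq 2$ of $n-1$ and set
\begin{equation*}
\mathcal{B}_k(n)=\bigl\{c\in\mathbb{F}_n^{\ast}:c\text{ is good and }\det(\underline{S}_{n,c}(\omega_k))=0\bigr\}.
\end{equation*}
Because $d(n-1)=n^{o(1)}$, it suffices to prove $|\mathcal{B}_k(n)|=o(n/d(n-1))$ for each $k$; a union bound then yields $\sum_{k}|\mathcal{B}_k(n)|=o(n)$, which is negligible compared to the $\varphi(n-1)=\Omega(n/\log\log n)$ good $c$'s, giving the conjectured density.

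The natural tool for bounding $|\mathcal{B}_k(n)|$ is the $\mathbb{F}_n^{\ast}$-equivariance of $S_{n,c}$ used at the end of Section~\ref{from_A_to_MCB_S}, combined with the explicit polynomial presentation of Section~\ref{polynomials in the blocks}. Every nonzero entry of $\underline{S}_{n,c}(\omega_k)$ has the form $\pm 1$ or $\pm\omega_k^{j}$, with the positions and exponents determined by $c$ through the column data $z=-(1+y)/c$ and the combinatorial conditions $y,z,zy^{-1}\in\{x,x^{-1}\}$. Block-diagonalizing by isotypic components for the multiplicative characters of $\mathbb{F}_n^{\ast}$ of order dividing $k$ should factor $\det(\underline{S}_{n,c}(\omega_k))$ as a product; the factor attached to the trivial character is exactly where the vector $v_{n,k}$ of Theorem~\ref{singularity_property_T} lives, and the goal would be to rewrite each remaining factor (up to units) as a short character sum of the form $\sum_{y}\chi_1(y)\chi_2(1+y)$, whose zero set in $c$ has size $O(\sqrt{n})$ by Weil-type bounds.

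The hard part --- and the reason the statement is presented as a conjecture --- is the essentially nonlinear dependence of the entries on $c$ through the discrete logarithm, which enters both the positions of the nonzero entries and their values. The column labels $y$ themselves vary with $c$ via $z=-(1+y)/c$, so the ``character-sum factorization'' I sketched is not a bare Jacobi sum but a twisted version whose vanishing locus must be controlled directly. I expect that a genuine proof requires either (a) identifying an additional symmetry that cleans up the factorization into bona fide Jacobi sums, or (b) a second-moment argument bounding $\mathbb{E}_c|\det\underline{S}_{n,c}(\omega_k)|^{2}$ from below and matching it against an $L^{\infty}$ bound. Either route is substantive new work that goes beyond what is developed in the current paper.
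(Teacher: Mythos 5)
The statement you are addressing is a \emph{conjecture}: the paper offers no proof of it, only computational evidence (Figures \ref{fig:acyclic_probability} and \ref{fig:acyclic_percentage}) and Claim \ref{conjecture_claim}, which merely verifies that the density predicted by the conjecture matches the upper bound forced by Theorem \ref{singularity_property_T}. Your proposal, as you yourself concede in the final sentence, is likewise not a proof but a research program, so there is nothing to compare at the level of completed arguments. The first half of your sketch is a correct reading of the paper: acyclicity reduces via Theorem \ref{reduction_small_matrices} to the non-singularity of all the matrices $\underline{S}_{n,c}(\omega_k)$, the kernel vectors $v_{n,k}$ of Theorem \ref{singularity_property_T} exist exactly when $\gcd\bigl(\frac{n-1}{o(c)},\frac{n-1}{2}\bigr)>1$, and for the ``good'' $c$ of the conjecture these obstructions vanish, leaving the union bound over divisors as the natural frame. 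One small but real slip: Theorem \ref{reduction_small_matrices} requires checking \emph{every} divisor $k\mid(n-1)$, including $k=1$; you exclude $k=1$, yet the paper explicitly notes (in the subsection on more non-acyclic cases) that linear dependencies in $\underline{S}(1)$ and $\underline{S}(-1)$ do occur even for primitive $c$, e.g.\ when $c^2+c-1\equiv 0\bmod n$, so the cases $k=1,2$ cannot be waved away.

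The genuine gap is the step you flag as ``the hard part.'' The quantity $\det(\underline{S}_{n,c}(\omega_k))$ is the determinant of a matrix of size $\frac{n-3}{2}$, whose sparsity pattern and entries both depend on $c$ through discrete logarithms and through the column map $z=-\frac{1+y}{c}$. There is no argument given (here or in the paper) that this determinant factors, after isotypic decomposition, into character sums of bounded complexity, nor that its zero locus in $c$ has size $O(\sqrt{n})$; a determinant of a matrix of unbounded size is not a low-degree polynomial in $c$ in any sense that Weil-type bounds apply to directly. Absent that, the bound $|\mathcal{B}_k(n)|=o(n/d(n-1))$ is an assumption equivalent in strength to the conjecture itself. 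So the proposal correctly identifies where the difficulty lies and is consistent with everything the paper establishes, but it does not close the gap, and the statement remains a conjecture.
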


In light of Figure \ref{fig:acyclic_percentage}, we can state Conjecture \ref{conj} in terms of acyclic ratios:

\begin{figure}[!htb]
    \centering
    \includegraphics[width=0.95\textwidth]{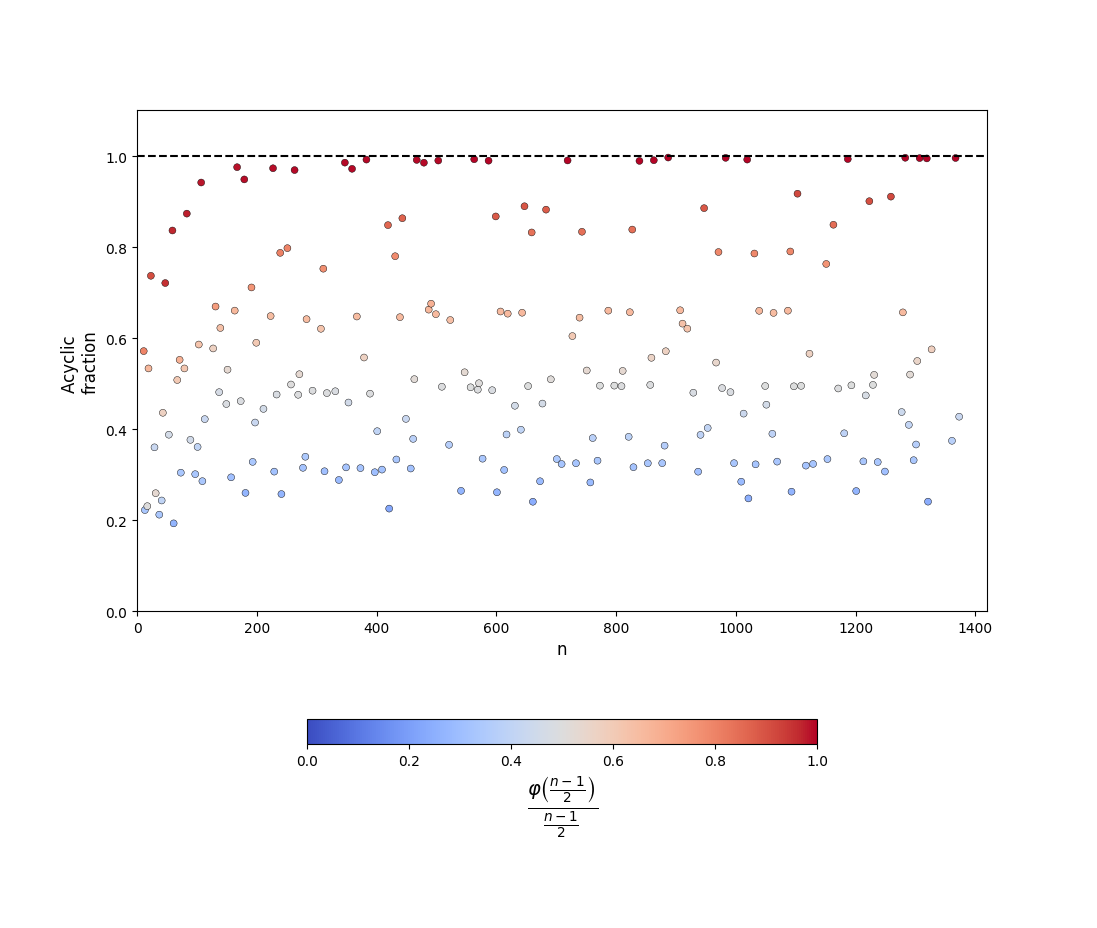}
    \caption{
    For each prime $11\leq n\leq 1373$ this is the probability that $X_{2,n,c}$ is acyclic over all eligible $c\in\mathbb{F}_n^*\setminus\{\pm 1,-2\}$.}
    
    \label{fig:acyclic_percentage}
\end{figure}

\begin{claim}\label{conjecture_claim}
By Theorem \ref{singularity_property_T}, for every prime $n$ the acyclic ratio (i.e., the fraction of complexes $X_{2,n,c}$ that are acyclic) is at most
$$\frac{\varphi(\frac{n-1}{2})}{\frac{n-1}{2}}$$
where $\varphi$ is Euler's function. Conjecture \ref{conj} posits that this bound is asymptotically tight. 
\end{claim}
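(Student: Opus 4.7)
The plan is to combine Theorem \ref{singularity_property_T} with a short piece of cyclic-group bookkeeping. The theorem pins down a simple list of ``possibly acyclic'' values of $c$: when $n\equiv 1\pmod 4$ only the primitive $c\in\mathbb{F}_n^*$ are candidates, and when $n\equiv 3\pmod 4$ the candidates are the primitive elements together with the elements of order $(n-1)/2$. So the entire task reduces to counting these candidate sets and checking that the count comes out to $2\varphi((n-1)/2)$ in both cases.

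First I would invoke the elementary fact that in a cyclic group of order $N$ the number of elements of order exactly $k$ (for $k\mid N$) is $\varphi(k)$. Applied to $\mathbb{F}_n^*$, which is cyclic of order $n-1$, this gives $\varphi(n-1)$ primitive elements and $\varphi((n-1)/2)$ elements of order $(n-1)/2$.

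Next I would dispatch the two congruence classes of $n\bmod 4$ and arrive at the same expression. When $n\equiv 1\pmod 4$, write $n-1=2^{a}t$ with $a\ge 2$ and $t$ odd; a direct application of the product formula for $\varphi$ gives $\varphi(n-1)=2\varphi((n-1)/2)$. When $n\equiv 3\pmod 4$ the integer $m=(n-1)/2$ is odd and coprime to $2$, so multiplicativity of $\varphi$ yields $\varphi(n-1)=\varphi(2)\varphi(m)=\varphi((n-1)/2)$; adding the $\varphi((n-1)/2)$ elements of order $m$ again produces $2\varphi((n-1)/2)$.

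The conclusion is then immediate: the number of $c\in\mathbb{F}_n^*$ for which $X_{2,n,c}$ can be acyclic is at most $2\varphi((n-1)/2)$ out of $n-1$, and
$$\frac{2\varphi((n-1)/2)}{n-1}\;=\;\frac{\varphi((n-1)/2)}{(n-1)/2}.$$
There is no real obstacle here; the only mildly noteworthy point is that the two distinct constraints governed by $n\bmod 4$ collapse to a single clean expression, which is exactly what allows the bound to be stated uniformly rather than in cases.
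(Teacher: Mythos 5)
Your proposal is correct and follows essentially the same route as the paper: count the candidate $c$ via the fact that a cyclic group of order $n-1$ has $\varphi(d)$ elements of each order $d\mid n-1$, then use multiplicativity of $\varphi$ in the two cases $n\equiv 1,3\pmod 4$ to show the candidate count equals $2\varphi\left(\frac{n-1}{2}\right)$ in both. The only cosmetic difference is that the paper phrases the case analysis as two identities between ratios rather than unifying the counts first; the underlying computation is identical.
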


\begin{proof}
Recall that the number of elements of order $d$ in a cyclic group of order $m$, is $\varphi(d)$ for every divisor $d$ of $m$. Since the cyclic group $\mathbb{F}_n^*$ contains $n-1$ elements, it is left to prove that 
\begin{itemize}
    \item When $n\equiv 1\bmod 4$: 
    $$\frac{\varphi(\frac{n-1}{2})}{\frac{n-1}{2}}=\frac{\varphi(n-1)}{n-1}$$
    \item When $n\equiv 3\bmod 4$:
    $$\frac{\varphi(\frac{n-1}{2})}{\frac{n-1}{2}}=\frac{\varphi(n-1)+\varphi(\frac{n-1}{2})}{n-1}$$
\end{itemize}
These claims follow from the multiplicative properties of $\varphi$. Namely, 
\begin{equation}\label{eq:totient_gcd}
    \varphi(m\cdot k)=\frac{d\cdot\varphi(m)\cdot\varphi(k)}{\varphi(d)}
\end{equation}
for two positive integers $m,k$, where $d=\gcd(m,k)$. The claim follows by applying Equation \ref{eq:totient_gcd} with $m=2,k=\frac{n-1}{2}$. If $n\equiv 1\bmod 4$, then $\varphi(n-1)=2\cdot\varphi(\frac{n-1}{2})$ and when $n\equiv 3\bmod 4$, $\varphi(n-1)=\varphi(\frac{n-1}{2})$.
\end{proof}

In figure \ref{fig:acyclic_percentage} observe the good agreement with the function presented in Claim \ref{conjecture_claim}. For example, the acyclic percentage is close to $1$ for primes $n$ of the form $\frac{n-1}{2}=p$, where $p$ is prime. The lowest acyclic percentage is attained when $\frac{n-1}{2}=2\cdot3\cdot5\dots$. All of this is in agreement with our Conjecture \ref{conj}.

\subsection{More non-acyclic cases}

As the two last figures illustrate, Theorem \ref{singularity_property_T} does not capture all the cases in which $X_{n,c}$ is non-acyclic. Indeed, there are simple linear dependencies between the rows or columns of the matrices $\underline{S}(1)$ and $\underline{S}(-1)$ even when $c$ is a generator of $\mathbb{F}_n^*$ or has order $\frac{n-1}{2}$ when $n\equiv3\mod{4}$. A simple example of non-acyclic cases not captured by Theorem \ref{singularity_property_T} is when $$c^2+c-1\equiv0\mod{n}$$ 
To see why in these cases $X_{n,c}$ is non-acyclic, we pick $k=\frac{n-1}{2}$, and define $\Tilde{v}_{n,k}$ to be $v_{n,k}$ as in the proof of Theorem \ref{singularity_property_T}, just with the single change $\Tilde{v}_{n,k}[-c-1]=0$. It is a straightforward verification with the same proof that in these cases $\Tilde{v}_{n,k}\cdot\underline{S}(\omega_k)=\Vec{0}$ using the fact that $-c-1\equiv-\frac{1}{c}\mod{n}$.

\section{Full Matrices}\label{section:full_matrices}

In this section we sketch another possible approach to the construction of an infinite family of 2-dimensional hyperpaths. A certain extension of the boundary operator matrix plays a key role in these developments. 

\begin{definition}\label{def:full matrix definition}
For a prime $n$ and $c\in\{2,\dots,n-3\}$ the matrix $F=F_{n,c}$ is an $(n^2-1)\times (n^2-1)$ binary matrix. A row $\rho_{xy}$ of $F$ is indexed by an ordered pair $(x,y)\in \mathbb{F}_n^2\setminus(0,0)$. A column $f_{(xyz)}$ is indexed by an ordered triple $(x,y,z)$, where $(x,y)\in \mathbb{F}_n^2\setminus(0,0)$ and $z=-\frac{x+y}{c}$ i.e., $x+y+c\cdot z=0$. The $[(u,v),(x,y,z)]$ entry of $F$ is $1$ iff  
\begin{enumerate}
    \item $(u,v)=(x,y)$, this is called an $xy$-entry.
    
    \item $(u,v)=(y,z)$, this is called an $yz$-entry.
    
    \item $(u,v)=(z,x)$, this is called an $zx$-entry.
\end{enumerate}
\end{definition}

Note that the $xy$ entries in $F$ form a permutation matrix which we call $P_{xy}$. Likewise for $P_{yz}$ and $P_{zx}$. Also $P_{xy}=I$. Consequently:
\begin{equation}\label{eq:sum_of_3}
F=I+P_{yz}+P_{zx},   
\end{equation}
Where $P_{yz}, P_{zx}$ are permutation matrices.\\ 
We next construct a matrix $M_{n,c}$. It has one row $R_{u,v}=\rho_{u,v}-\rho_{v,u}$ for each ordered pair $u < v$.
A column $\mu_{(xyz)}$ of $M_{n,c}$ is indexed by a triple $(x,y,z)\in \mathbb{F}_n^3$, where $x+y+cz=0, ~x<y, ~x\neq z$ and $y\neq z$.

\begin{claim}\label{turning_full_matrix_into_A}
$M_{n,c}=A_{n,c}$.
\end{claim}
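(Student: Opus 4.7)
The plan is to show that the rows and columns of $M_{n,c}$ are indexed by the same combinatorial objects as those of $A_{n,c}$ (edges and $2$-faces of $X_{2,n,c}$ respectively), and then verify entrywise agreement by unpacking Definition \ref{def:full matrix definition}. The rows $R_{u,v}$ for $u<v$ correspond to edges $\{u,v\}$ of $K_n$ in an obvious way. For the columns I use the uniqueness observation stated just before Claim \ref{right_number}: since we have assumed $c\notin\{0,1,-2\}$, every $2$-face of $X_{2,n,c}$ has a unique distinguished coordinate, namely the one multiplied by $c$. Calling this coordinate $z$ and ordering the remaining two so that $x<y$ yields a canonical triple $(x,y,z)$ satisfying exactly the constraints in the column index set of $M_{n,c}$.

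Next I compute an arbitrary entry. By construction $M[R_{u,v},\mu_{(xyz)}]=F[(u,v),(xyz)]-F[(v,u),(xyz)]$, and by (\ref{eq:sum_of_3}) the nonzero rows of $F$ in column $(xyz)$ are exactly $(x,y)$, $(y,z)$ and $(z,x)$. Hence the only unordered pairs $\{u,v\}$ on which column $\mu_{(xyz)}$ is nonzero are the three edges $\{x,y\}$, $\{y,z\}$, $\{z,x\}$ of the $2$-simplex $\{x,y,z\}$, matching the support of column $\{x,y,z\}$ of the boundary matrix $A_{n,c}$.

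It remains to match the signs. A short case analysis, splitting on whether $y<z$ or $z<y$ and whether $x<z$ or $z<x$, gives entry $+1$ on $\{x,y\}$, entry $+1$ (resp.\ $-1$) on $\{y,z\}$ when $y<z$ (resp.\ $z<y$), and entry $+1$ (resp.\ $-1$) on $\{z,x\}$ when $z<x$ (resp.\ $x<z$). To compare with $A_{n,c}$ I orient the $2$-face as $(x,y,z)$, so $\partial(x,y,z)=(x,y)-(x,z)+(y,z)$, and I represent each oriented edge by its canonical form (smaller vertex first), picking up a sign whenever the order is reversed. A direct check shows these canonical signs coincide with the ones computed above, establishing $M_{n,c}=A_{n,c}$.

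The main obstacle is purely sign bookkeeping; the content of the claim reduces, once the orientation convention is fixed (smaller vertex first for edges, and $(x,y,z)$ with $x<y$ and $z$ the $c$-multiplied coordinate for $2$-faces), to the elementary case analysis described above. No deeper structure of $X_{2,n,c}$ beyond the uniqueness of the distinguished coordinate is required.
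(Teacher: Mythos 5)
Your proposal is correct and follows essentially the same route as the paper: identify rows with edges and columns with $2$-faces (via the uniqueness of the $c$-multiplied coordinate), observe that the support of $\mu_{(xyz)}$ is $\{x,y\},\{y,z\},\{z,x\}$ by Equation (\ref{eq:sum_of_3}), and then do the same sign case analysis to match $\partial(x,y,z)=(x,y)-(x,z)+(y,z)$ under the smaller-vertex-first convention. The signs you compute agree with the paper's, so nothing further is needed.
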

\begin{proof}
That $M_{n,c}$ and $A_{n,c}$ have the same dimensions follows from Claim \ref{right_number}. Moreover, the rows and columns of the two matrices are identically indexed. It remains to show that like $A_{n,c}$, the matrix $M_{n,c}$ is a minor of the boundary matrix, i.e.,
\begin{equation*}
\mu_{(xyz)}=e_{(x,y)}+e_{(y,z)}+e_{(z,x)}
\end{equation*}
where for $u<v$ we define $e_{(u,v)}$ to be the $0/1$ column vector with a single $1$ in position $(u,v)$. We extend the definition to the range $u>v$ via $e_{(u,v)}=-e_{(v,u)}$. Note that here we never need to deal with the case $u=v$. Since $x<y$ in both rows and columns indexing, the $(x,y)$ entry on both sides is $1$. Recall: (i) In the indexing of columns $z\neq x, y$, (ii) The $(y,z)$ entry of $f_{(xyz)}$ is $1$ and its $(z,y)$ entry is $0$, (iii) The $(z,x)$ entry of $f_{(xyz)}$ is $1$ and its $(x,z)$ entry is $0$, (iv) $R_{u,v}=\rho_{u,v}-\rho_{v,u}$. Therefore if $z>y$, the $(y,z)$ entry of $\mu_{(xyz)}$ is $1$, while if $y>z$, it is $-1$, as claimed. Likewise, if $z<x$, the $(z,x)$ entry of $\mu_{(xyz)}$ is $1$, while if $z>x$, it is $-1$, again as claimed.

\end{proof}

Note that $\text{rank}(F_{n,c})\le n^2-n$ since there are $n-1$ pairs of identical columns in $F_{n,c}$, namely, $f_{(a,-(c+1)a,a)}=f_{(-(c+1)a,a,a)}$ for every $a\in \mathbb{F}_n^*$.

\begin{claim}\label{claim:full-rank-to-full-rank}
If $\text{rank}(F_{n,c})=n^2-n$, then $\text{rank}(A_{n,c})=\binom{n-1}{2}$, in which case $X_{n,c}$ is a hypertree. 
\end{claim}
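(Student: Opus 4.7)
The plan is to exploit the natural $\mathbb{Z}/2$-symmetry of $F=F_{n,c}$ obtained by swapping the first two coordinates in both the row and column indices. Let $\tau$ be the involution on the row index set $\mathbb{F}_n^2\setminus\{(0,0)\}$ that swaps $(u,v)\leftrightarrow(v,u)$ (fixing the diagonal indices $(u,u)$), and let $\sigma$ be the involution on columns that swaps $(x,y,z)\leftrightarrow(y,x,z)$; this is well defined since $x+y+cz=0$ is symmetric in $x,y$. Checking the three cases in Definition \ref{def:full matrix definition} directly shows $F[\tau(u,v),\sigma(x,y,z)] = F[(u,v),(x,y,z)]$, so $F$ commutes with these involutions.

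I would then split both the row and column spaces into $\pm 1$-eigenspaces: write the column space as $V^+\oplus V^-$ under $\sigma$ and the row space as $W^+\oplus W^-$ under $\tau$. Counting orbits, $\dim V^-=\dim W^-=\binom{n}{2}$ (one generator per orbit of size $2$) and $\dim V^+=\dim W^+=\binom{n}{2}+(n-1)$ (orbit-sums plus the $n-1$ fixed indices). By equivariance, $F$ is block diagonal with respect to this splitting, so
\begin{equation*}
\operatorname{rank}(F) \;=\; \operatorname{rank}(F^-) + \operatorname{rank}(F^+),
\end{equation*}
where $F^{\pm}$ denotes the two blocks.

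The crux is to identify $F^{-}$ with $A_{n,c}$. The image of $F^-$ is spanned by the vectors $f_{(x,y,z)}-f_{(y,x,z)}$ for $x<y$, and by (\ref{eq:sum_of_3}) each such vector has $\pm 1$ entries in exactly the six row positions $(x,y),(y,x),(y,z),(z,y),(z,x),(x,z)$. Expressed in the antisymmetric row basis $\{R_{u,v}=\rho_{u,v}-\rho_{v,u}\}_{u<v}$, this is precisely the signed boundary $e_{(x,y)}+e_{(y,z)}+e_{(z,x)}=\mu_{(x,y,z)}$ from Claim \ref{turning_full_matrix_into_A}. When $z\in\{x,y\}$ the two columns $f_{(x,y,z)}$ and $f_{(y,x,z)}$ coincide---these are exactly the $n-1$ duplicated pairs already noted in the excerpt---so the corresponding generator vanishes. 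Discarding these, the remaining $\binom{n-1}{2}$ image vectors coincide (up to sign) with the columns of $M_{n,c}=A_{n,c}$, and therefore $\operatorname{rank}(F^-)=\operatorname{rank}(A_{n,c})$.

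To finish I would combine the trivial upper bounds $\operatorname{rank}(F^+)\le\binom{n}{2}+(n-1)$ and $\operatorname{rank}(F^-)\le\binom{n-1}{2}$ with the identity $\binom{n-1}{2}+\binom{n}{2}+(n-1)=n^2-n$. If $\operatorname{rank}(F_{n,c})=n^2-n$, both inequalities must be sharp, forcing $\operatorname{rank}(A_{n,c})=\binom{n-1}{2}$; this is exactly the condition that $X_{n,c}$ is a hypertree. The one spot that requires care is the sign bookkeeping in identifying $F^-$ with $A_{n,c}$: one must verify that the three $\pm R_{\{\cdot,\cdot\}}$ contributions carry signs matching the oriented-edge convention $e_{(u,v)}=-e_{(v,u)}$ used in Claim \ref{turning_full_matrix_into_A}. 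Since that claim already carried out the analogous matching for $\mu_{(x,y,z)}$, the verification here should be routine.
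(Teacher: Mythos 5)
Your argument is correct, and it reaches the conclusion by a genuinely different route from the paper's. The paper first deletes the $n-1$ duplicated columns $f_{(-(c+1)a,a,a)}$ and the $n-1$ redundant rows $\rho_{0,k}$ to get a square $(n^2-n)\times(n^2-n)$ matrix $\mathbf{F}$ with $\operatorname{rank}(\mathbf{F})=\operatorname{rank}(F)$, writes the explicit identity $\begin{pmatrix}I & -I & 0_L\end{pmatrix}\mathbf{F}\begin{pmatrix}I \\ 0_R\end{pmatrix}=\mathbf{A}$, and then shows that in $\mathbf{F}_L=\begin{pmatrix}I & -I & 0_L\end{pmatrix}\mathbf{F}$ every column beyond the first $\binom{n-1}{2}$ is either zero or the negative of an earlier one, so the full rank $\binom{n-1}{2}$ of $\mathbf{F}_L$ must be carried by the columns forming $\mathbf{A}$. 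Your equivariant splitting $\operatorname{rank}(F)=\operatorname{rank}(F^+)+\operatorname{rank}(F^-)$ encodes exactly the same combinatorial facts --- the antisymmetrizer $\begin{pmatrix}I & -I & 0_L\end{pmatrix}$ is the projection onto $W^-$, the vanishing generators with $z\in\{x,y\}$ are precisely the paper's $n-1$ duplicated column pairs, and the identification $f_{(x,y,z)}-f_{(y,x,z)}\mapsto\mu_{(x,y,z)}$ is Claim \ref{turning_full_matrix_into_A} --- but it buys two things: you never need the row-deletion step and its linear dependence $\rho_{0,k}=\sum_{j\neq k}\rho_{k,j}-\sum_{i\neq k,0}\rho_{i,k}$, and the threshold $n^2-n=\left(\binom{n}{2}+n-1\right)+\binom{n-1}{2}$ acquires a conceptual meaning as the sum of the maximal ranks of the two blocks, so that full rank of $F$ forces full rank of the antisymmetric block. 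The one step worth writing out explicitly is the equivariance identity $F[(u,v),(y,x,z)]=F[(v,u),(x,y,z)]$, since that is what converts your \emph{column} differences $f_{(x,y,z)}-f_{(y,x,z)}$, restricted to the coordinates $u<v$, into the \emph{row} differences $R_{u,v}=\rho_{u,v}-\rho_{v,u}$ defining $M_{n,c}$; once that is in place the remaining sign bookkeeping is indeed the verification already carried out in Claim \ref{turning_full_matrix_into_A}.
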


\begin{proof}
As mentioned earlier, removing from $F$ all $n-1$ columns $f_{(-(c+1)a,a,a)}$ over $a\in \mathbb{F}_n^*$ does not decrease the rank of $F$. Also, the $n-1$ rows $\{\rho_{0,k}\suchthat k\in \mathbb{F}_n^*\}$ are in the linear span of the other rows and can be removed from $F$ without reducing the rank. The linear dependence $$\rho_{0,k}=\sum_{j\neq k}\rho_{k,j}-\sum_{i\neq k,0}\rho_{i,k}$$ follows from 
\begin{equation}\label{eq:linear_dependence_star}
    \sum_{j\neq k} \rho_{k,j}=\sum_{i\neq k} \rho_{i,k}
\end{equation}
Equation (\ref{eq:linear_dependence_star}) states that two row vectors are equal. Let us check this for each coordinate. Since coordinates correspond to columns, let us consider the column indexed by $(u,v,w)$. If $k\not\in\{u,v,w\}$, the corresponding coordinate in both vectors is $0$. If $k\in\{u,v,w\}$, this coordinate in both vectors is $1$. (Note that no column is indexed by $(k,k,k)$ since $c\neq-2$). \\
In other words, $\text{rank}(\mathbf{F})=\text{rank}(F)$, where $\mathbf{F}=\mathbf{F}_{n,c}$ is the $(n^2-n)\times (n^2-n)$ matrix that results from $F=F_{n,c}$ after these column and row deletions. Also, as discussed in section \ref{from_A_to_MCB_S}, $\text{rank}(\mathbf{A})=\text{rank}(A)$, where $\mathbf{A}=\mathbf{A}_{n,c}$ the $\binom{n-1}{2}\times\binom{n-1}{2}$ matrix that is obtained from $A=A_{n,c}$ after deleting the set of $n-1$ rows $\{\rho_{0,k}\suchthat k\in \mathbb{F}_n^*\}$. Claim \ref{turning_full_matrix_into_A} describes a linear relation between $F$ and $A$. The same linear transformation captures also the relation between $\mathbf{F}$ and $\mathbf{A}$.

\begin{equation}\label{F_into_A_equation}
    \begin{pmatrix}
    I_{\binom{n-1}{2}} & -I_{\binom{n-1}{2}} & 0_L 
    \end{pmatrix}
    \mathbf{F} 
    \begin{pmatrix}
    I_{\binom{n-1}{2}} \\
    0_R
    \end{pmatrix}
    =\mathbf{A}
\end{equation}
Where $0_L$ is the all-zero matrix of size $\binom{n-1}{2}\times(2n-2)$ and $0_R$ is the all zero matrix of size $(\n^2-n-\binom{n-1}{2})\times\binom{n-1}{2}$. \\
It are left to prove that if $\mathbf{F}$ is invertible then so is $\mathbf{A}$. \\
$\mathbf{F}$'s rows are indexed as follows: the first $\binom{n-1}{2}$ rows correspond to pairs $(x,y)$ with $1\leq x<y\leq n-1$. The row $\binom{n-1}{2}$ below that of $(x,y)$ corresponds to $(y,x)$. Finally, $n-1$ rows for pairs $(x,x)$ and another $n-1$ for pairs $(x,0)$ over $x\in \mathbb{F}_n^*$.

The first $\binom{n-1}{2}$ columns in $\mathbf{F}$ are indexed by ordered triples $(x,y,z)$ with $x<y$ and $x+y+cz=0$. At $\binom{n-1}{2}$ positions to the right of column $(x,y,z)$ is column $(y,x,z)$. Then come $n-1$ columns $(x,x,-\frac{2x}{c})$ and another $n-1$ columns $(-(c+1)x,x,x)$ over every $x\in \mathbb{F}_n^*$. Define 

\begin{equation}\label{F_L_into_F_equation}
    \mathbf{F}_L:=\begin{pmatrix}
    I_{\binom{n-1}{2}} & -I_{\binom{n-1}{2}} & 0_L 
    \end{pmatrix}
    \mathbf{F}
\end{equation}
Since $\mathbf{F}$ is invertible, there holds: 
\begin{equation*}
    \text{rank}(\mathbf{F}_L)=
    \text{rank}(
    \begin{pmatrix}
    I_{\binom{n-1}{2}} & -I_{\binom{n-1}{2}} & 0_L 
    \end{pmatrix})=
    \binom{n-1}{2}
\end{equation*}

We turn to show that the first $\binom{n-1}{2}$ columns in $\mathbf{F}_L$ are linearly independent.
In light of equation \ref{F_into_A_equation} this implies that $\mathbf{A}$ has a full rank. 

\begin{enumerate}
\item\label{argument_1_zero_columns} A column indexed $(x,x,-\frac{2x}{c})$ in $\mathbf{F}_L$ is all-zero: Let us see which rows in $$\begin{pmatrix}
I_{\binom{n-1}{2}} & -I_{\binom{n-1}{2}} & 0_L 
\end{pmatrix}$$ have a non-zero entry in this column in $\mathbf{F}$. The only such row is the one indexed by $x\cdot(1,-\frac{2}{c})$. In Equation (\ref{F_L_into_F_equation}) we obtain $+1$ for $x\cdot(1,-\frac{2}{c})$ and $-1$ for $x\cdot(-\frac{2}{c},1)$, for a total of $0$.
\item As in item \ref{argument_1_zero_columns} every column indexed by $x\cdot(y,1,1)$ for $y=-1-c$ in $\mathbf{F}_L$ is all zeros.
\item Consequently the first $2\cdot\binom{n-1}{2}$ columns in $\mathbf{F}_L$ have rank $\binom{n-1}{2}$. We next see that the first $\binom{n-1}{2}$ columns are just next $\binom{n-1}{2}$ columns times $-1$, finishing the proof. In column $(x,y,z)$ of $\mathbf{F}$ there are $1$ entries in rows $(x,y),\ (y,z)$ and $(z,x)$ whereas column $(y,x,z)$ has $1$ entries in exactly the opposite edges, $(y,x),\ (z,y)$ and $(x,z)$. Therefore $$\mathbf{F}_L[:,(x,y,z)]=-\mathbf{F}_L[:,(y,x,z)]$$
\end{enumerate}
\end{proof}

\section*{Acknowledgement}
We thank Roy Meshulam for insightful comments on this manuscript.

\appendix
\section{Appendix: Matrix map}\label{appendix:_matrix_map}
It may not be very easy to keep track of the many matrices defined in this paper. In this section we attempt to review the connections between them and provide hyperlinks to the basic definitions.
\subsection{A map for Sections \ref{section:MCB}-\ref{section:Non-acyclic Complexes}}
In Section \ref{section:MCB} we introduce the class $MCB_{r,t}(\mathbb{Q})$ of $rt\times rt$ matrices with rational entries. Such a matrix $E$ is a $t\times t$ matrix of blocks each of which is an $r\times r$ circulant matrix (\ref{MCB_definition}). We express every circulant block as a polynomial in the permutation matrix $P$ (\ref{eq:permutation_matrix}), so that $E=E(P)$ (\ref{E(P)}) is a $t\times t$ matrix whose entries are polynomials of degree $\le r$ in $P$. Alternatively $E(P)$ is a $t\times t$ matrix over the polynomial ring $\mathcal{R}$ (\ref{eq:R_polynomial_ring}). We can evaluate each polynomial in $E(P)$ at a scalar $z\in\mathbb{C}$ to obtain a $t\times t$ complex matrix $\underline{E}(z)$ as in Equation (\ref{underscore(E)(z)}). The matrices $\mathcal{F}_r,\mathcal{L},Q,X$ appear in the proof of Theorem \ref{reduction_small_matrices}. In Section \ref{from_A_to_MCB_S} we find a rank-preserving transformation of $A$, the $\binom{n}{2}\times\binom{n-1}{2}$ boundary operator matrix of $X$, (see the background section \ref{background_simplicial_combinatorics}) to $S\in MCB_{n-1,\frac{n-3}{2}}(\mathbb{Q})$ (\ref{eq:S}).  

\subsection{A map for section \ref{section:full_matrices}}
Definition \ref{def:full matrix definition} introduces the 'full' matrix $F=F_{n,c}$, which is a sum of three permutations (\ref{eq:sum_of_3}). We transform $F_{n,c}$ into $M_{n,c}$ using simple row differences and deleting some column and rows. We observe (Claim \ref{turning_full_matrix_into_A}) that $M_{n,c}$ coincides with the boundary matrix $A_{n,c}$. The matrices $\mathbf{F}_{n,c}$ and $\mathbf{A}_{n,c}$ appear in Claim \ref{claim:full-rank-to-full-rank} which shows that if $F_{n,c}$ is of full rank then so is $A_{n,c}$.

\printbibliography

@article{kalai1983enumeration,
  title={Enumeration of Q-acyclic simplicial complexes},
  author={Kalai, Gil},
  journal={Israel Journal of Mathematics},
  volume={45},
  number={4},
  pages={337--351},
  year={1983},
  publisher={Springer}
}

@article{linial2019extremal,
  title={Extremal hypercuts and shadows of simplicial complexes},
  author={Linial, Nati and Newman, Ilan and Peled, Yuval and Rabinovich, Yuri},
  journal={Israel Journal of Mathematics},
  volume={229},
  number={1},
  pages={133--163},
  year={2019},
  publisher={Springer}
}

@article{linial2019enumeration,
  title={Enumeration and randomized constructions of hypertrees},
  author={Linial, Nati and Peled, Yuval},
  journal={Random Structures \& Algorithms},
  volume={55},
  number={3},
  pages={677--695},
  year={2019},
  publisher={Wiley Online Library}
}

@article{linial2016phase,
  title={On the phase transition in random simplicial complexes},
  author={Linial, Nathan and Peled, Yuval},
  journal={Annals of Mathematics},
  pages={745--773},
  year={2016},
  publisher={JSTOR}
}

@article{linial2010sum,
  title={Sum complexes—a new family of hypertrees},
  author={Linial, Nathan and Meshulam, Roy and Rosenthal, Mishael},
  journal={Discrete \& Computational Geometry},
  volume={44},
  number={3},
  pages={622--636},
  year={2010},
  publisher={Springer}
}

@book{davis2013circulant,
  title={Circulant matrices},
  author={Davis, Philip J},
  year={2013},
  publisher={American Mathematical Soc.}
}

@article{rjasanow1994effective,
  title={Effective algorithms with circulant-block matrices},
  author={Rjasanow, Sergej},
  journal={Linear Algebra and its applications},
  volume={202},
  pages={55--69},
  year={1994},
  publisher={Elsevier}
}

@book{hardy1979introduction,
  title={An introduction to the theory of numbers},
  author={Hardy, Godfrey Harold and Wright, Edward Maitland},
  year={1979},
  pages={473--478},
  chapter={22.11},
  publisher={Oxford university press}
}

@article{tsitsas2007recursive,
  title={A recursive algorithm for the inversion of matrices with circulant blocks},
  author={Tsitsas, Nikolaos L and Alivizatos, Emmanouil G and Kalogeropoulos, Grigorios H},
  journal={Applied mathematics and computation},
  volume={188},
  number={1},
  pages={877--894},
  year={2007},
  publisher={Elsevier}
}

@inproceedings{le2014powers,
  title={Powers of tensors and fast matrix multiplication},
  author={Le Gall, Fran{\c{c}}ois},
  booktitle={Proceedings of the 39th international symposium on symbolic and algebraic computation},
  pages={296--303},
  year={2014}
}

@article{petkovic2009generalized,
  title={Generalized matrix inversion is not harder than matrix multiplication},
  author={Petkovi{\'c}, Marko D and Stanimirovi{\'c}, Predrag S},
  journal={Journal of computational and applied mathematics},
  volume={230},
  number={1},
  pages={270--282},
  year={2009},
  publisher={Elsevier}
}

@book{mcdonald1984linear,
  title={Linear algebra over commutative rings},
  author={McDonald, Bernard R},
  volume={87},
  year={1984},
  pages={18--27},
  chapter={1.D},
  publisher={Courier Corporation}
}

@article{aronshtam2013collapsibility,
  title={Collapsibility and vanishing of top homology in random simplicial complexes},
  author={Aronshtam, Lior and Linial, Nathan and {\L}uczak, Tomasz and Meshulam, Roy},
  journal={Discrete \& Computational Geometry},
  volume={49},
  number={2},
  pages={317--334},
  year={2013},
  publisher={Springer}
}

@slides{Blaser_matrix_survey,
  title={Fast matrix multiplication and related problems},
  author={Bl{\"a}ser, Markus},
  howpublished = "\url{http://www.ens-lyon.fr/LIP/MC2/data/uploads/slidesmarkusblaser.pdf}",
  pages={35--36}
}

@book{gauss2006untersuchungen,
  title={Untersuchungen {\"u}ber h{\"o}here Arithmetik},
  author={Gauss, Carl Friedrich},
  volume={191},
  year={2006},
  publisher={American Mathematical Soc.}
}

@article{mathew2015boundaries,
  title={Boundaries of hypertrees, and Hamiltonian cycles in simplicial complexes},
  author={Mathew, Rogers and Newman, Ilan and Rabinovich, Yuri and Rajendraprasad, Deepak},
  journal={arXiv preprint arXiv:1507.04471},
  year={2015}
}
\end{document}